\newtheorem{thm}{Theorem}[section]
\newtheorem{lem}[thm]{Lemma}
\newtheorem{cor}[thm]{Corollary}
\newtheorem{prp}[thm]{Proposition}
\newtheorem{ex}[thm]{Example}
\newcommand{\st}{{\mathrm{st}}\,}
\newcommand{\Volcic}{Vol\v{c}i\v{c}}
\newcommand{\ee}{\varepsilon}
\newcommand{\R}{\mathbb{R}}
\newcommand{\N}{\mathbb{N}}
\begin{document}
\hfill\today
\bigskip

\title{CHARACTERIZING THE DUAL MIXED VOLUME VIA ADDITIVE FUNCTIONALS}
\author[Paolo Dulio, Richard J. Gardner, and Carla Peri]
{Paolo Dulio, Richard J. Gardner, and Carla Peri}
\address{Dipartimento di Matematica ``F.~Brioschi",
Politecnico di Milano, Piazza Leonardo da Vinci 32, I-20133
Milano, Italy} \email{paolo.dulio@polimi.it}
\address{Department of Mathematics, Western
Washington University, Bellingham, WA 98225-9063}
\email{Richard.Gardner@wwu.edu}
\address{Universit\'{a} Cattolica del Sacro Cuore\\
Via Emilia Parmense 84\\
29122 Piacenza, Italy}
\email{carla.peri@unicatt.it}
\thanks{First author supported in
part by U.S.~National Science Foundation grant DMS-1103612}
\subjclass[2010]{Primary: 52A20, 52A30; secondary: 52A39, 52A41} \keywords{star set, star body, dual mixed volume, dual Brunn-Minkowski theory, convex body, mixed volume, Brunn-Minkowski theory, positive functional, additive functional} \maketitle
\begin{abstract}
Integral representations are obtained of positive additive functionals on finite products of the space of continuous functions (or of bounded Borel functions) on a compact Hausdorff space.  These are shown to yield characterizations of the dual mixed volume, the fundamental concept in the dual Brunn-Minkowski theory. The characterizations are shown to be best possible in the sense that none of the assumptions can be omitted.  The results obtained are in the spirit of a similar characterization of the mixed volume in the classical Brunn-Minkowski theory, obtained recently by Milman and Schneider, but the methods employed are completely different.
\end{abstract}

\section{Introduction}

At the core of modern convex geometry lies the Brunn-Minkowski theory.  This powerful apparatus, constructed by Minkowski, Blaschke, Aleksandrov, Fenchel, and many others, provides a framework within which questions concerning the metrical properties of convex bodies in Euclidean space $\R^n$ may be formulated and attacked.  The theory arises from combining two notions, volume and vector or Minkowski addition, defined between sets $A$ and $B$ by $A+B=\{x+y: x\in A,\,y\in B\}$.  Both are ingredients in Minkowski's theorem on mixed volumes, which states that if $K_1,\dots,K_m$ are compact convex sets in $\R^n$, and $t_1,\dots,t_m\ge 0$, the volume ${{\mathcal{H}}^n}(t_1K_1+\cdots +t_mK_m)$ is a homogeneous polynomial of degree $n$ in the variables $t_1,\dots,t_m$. (See Section~\ref{prelim} for unexplained notation and terminology.)  The coefficients in this polynomial are called mixed volumes. When $m=n$, $K_1=\cdots=K_i=K$, and $K_{i+1}=\cdots=K_n=B^n$, the unit ball in $\R^n$, then, up to constant factors, the mixed volumes turn out to be averages of volumes of orthogonal projections of $K$ onto subspaces, and include the volume, surface area, and mean width of $K$, as special cases. The classic treatise of Schneider \cite{Sch93} provides a detailed survey of the Brunn-Minkowski theory and many references there give testament to the wide variety of its applications in science.

In the last few decades, the Brunn-Minkowski theory has been extended in several important ways.  One such extension, now called the dual Brunn-Minkowski theory, arose from the 1975 observation of Lutwak \cite{L1} that if $K_1,\dots,K_m$ are star sets (bounded Borel sets star-shaped at the origin $o$) in $\R^n$, and $t_1,\dots,t_m\ge 0$, the volume ${{\mathcal{H}}^n}(t_1K_1\widetilde{+}\cdots\widetilde{+}t_mK_m)$ is a homogeneous polynomial of degree $n$ in the variables $t_1,\dots,t_m$.  (Here $\widetilde{+}$ denotes radial addition; one defines $x\widetilde{+}y=x+y$ if $x$, $y$, and $o$ are collinear, $x\widetilde{+}y=o$, otherwise, and
$$L\widetilde{+}M=\{x\widetilde{+}y: x\in L,\, y\in M\},$$
for star sets $L$ and $M$ in $\R^n$.) Lutwak called the coefficients of this polynomial dual mixed volumes and showed that up to constant factors, they include averages of volumes of intersections of a star set with (linear) subspaces.

There is a perfect analogy between Minkowski's theorem for mixed volumes and Lutwak's theorem for dual mixed volumes.  Such analogies, not always quite so perfect, between results and concepts in the Brunn-Minkowski theory and its dual, have often been observed, but this duality has not yet been fully explained.

Since 1975, the dual Brunn-Minkowski theory has seen a dramatic development, for example providing the tools for the solution of the Busemann-Petty problem in \cite{G3}, \cite{GKS}, \cite{L5}, and \cite{Z1}. It also has connections and applications to integral geometry, Minkowski geometry, the local theory of Banach spaces, geometric tomography, and stereology; see \cite{Gar06} and the references given there.

An extremely productive recent trend in convex geometry involves the characterization of useful concepts via a few of their properties.  For example, the operations of Minkowski and radial addition mentioned above were characterized in \cite{GHW1}.  These characterizations indicate the fundamental nature of these two operations in geometry and led to fresh insights into the nature of the Brunn-Minkowski theory and its possible extensions in \cite{GHW2}.  Earlier, Milman and Schneider \cite{MilS11} gave several results characterizing the mixed volume $V(K_1,\dots,K_n)$ of compact convex sets $K_1,\dots,K_n$ in $\R^n$, $n\ge 2$.  For example, \cite[Theorem~2]{MilS11} states (in a slightly more general form):

\smallskip

{\bf Theorem A} (Milman and Schneider).  {\em Let $F:\left({\mathcal{K}}_s^n\right)^n\to\R$, $n\ge 2$, be an additive, increasing functional on the class of $n$-tuples of centrally symmetric compact convex sets in $\R^n$.  If $F$ vanishes whenever two of its arguments are parallel line segments, then there is a $c\ge 0$ such that
$$F(K_1,\dots,K_n)=c\,V(K_1,\dots,K_n),$$
for all $K_1,\dots,K_n\in {\mathcal{K}}_s^n$.}

\smallskip

Here ``additive" means additive in each argument with respect to Minkowski addition, and ``increasing" means increasing with respect to set inclusion in each argument.  Thus this striking result characterizes the mixed volume via just three very simple properties, none of which may be omitted.

The principal goal of this paper is to establish a corresponding characterization of the dual mixed volume $\widetilde{V}(L_1,\dots,L_n)$ of star sets $L_1,\dots,L_n$ in $\R^n$, $n\ge 2$.  One of our main results (see Theorem~\ref{thm2}(iii) below) is as follows.

\smallskip

{\bf Theorem B}. {\em Let $F:\left({\mathcal{S}}^n\right)^n\to [0,\infty)$, $n\ge 2$, be an additive functional on the class of $n$-tuples of star sets in $\R^n$.  If $F$ is rotation invariant and vanishes whenever the intersection of two of its arguments is $\{o\}$, then there is a $c\ge 0$ such that
$$F(L_1,\dots,L_n)=c\,\widetilde{V}(L_1,\dots,L_n),$$
for all $L_1,\dots,L_n\in {\mathcal{S}}^n$.}

\smallskip

Here ``additive" means additive in each argument with respect to radial addition and ``rotation invariant" means that $F$ is unchanged if the same rotation of $S^{n-1}$ is applied to each of its arguments (see (\ref{staraddd}) and (\ref{starrotd}) below). In Examples~\ref{example1}, \ref{example2}, and~\ref{example3}, we show that none of the properties of $F$ assumed in Theorem~B can be omitted.

There is a strong similarity between Theorems A and B, another instance of the still unexplained duality mentioned above.  It is perhaps more instructive to comment on the differences between the two results.  Firstly, Theorem~B does not require symmetry of the sets concerned, as Theorem~A does. In fact, the role of symmetry in Theorem~A has not been completely resolved; see \cite[p.~672]{MilS11}.  Secondly, the functional $F$ in Theorem~B is assumed positive, while that in Theorem~A is real valued.  In Theorem~\ref{thm2real} below we actually provide a version of Theorem~B for real-valued functionals, and then have to assume that $F$ is also increasing, as in Theorem~A.  This highlights a third and important difference, namely, the quite strong assumption of rotation invariance in Theorem~B.  However, this seems unavoidable (see Example~\ref{example3}).  Moreover, we prove other results that pinpoint the role of rotation invariance; in particular, Theorem~\ref{thm2}(ii) completely characterizes all functionals $F$ satisfying the other hypotheses of Theorem~B but not rotation invariance.

If, as well as rotation invariance, the assumption on $F$ in Theorem~B that it vanishes when the intersection of two of its arguments is $\{o\}$ is also omitted, then Theorem~\ref{thm2}(i) states that there is a finite Radon measure $\mu$ in $\left(S^{n-1}\right)^n$ such that
\begin{equation}\label{Faddintro}
F(L_1,\dots,L_n)=\int_{\left(S^{n-1}\right)^n}\rho_{L_1}(u_1)\cdots \rho_{L_n}(u_n)\,d\mu(u_1,\dots,u_n),
\end{equation}
for all $L_1,\dots, L_n\in {\mathcal{S}}^n$.  Here $\rho_{L_i}$ denotes the radial function of $L_i$, the function giving for all $u\in S^{n-1}$ the distance from the origin to the boundary of $L_i$ in the direction $u$.  It is remarkable that only the additivity and positivity of $F$ are required for (\ref{Faddintro}) to hold.

Radial functions of star sets are just nonnegative bounded Borel functions on the unit sphere, so it is natural to view (\ref{Faddintro}) in the context of positive additive functionals on finite products of $B_+(X)$, the class of nonnegative bounded Borel functions on a compact Hausdorff space $X$. This is in fact the approach we take, and the corresponding result, more general than (\ref{Faddintro}), is stated in Theorem~\ref{thm3}.  The latter is in turn derived from a similar result, Theorem~\ref{thm1}, in which $B_+(X)$ is replaced by $C_+(X)$, the class of nonnegative continuous functions on a compact Hausdorff space $X$.  Just as Theorem~\ref{thm3} yields statements about positive additive functionals on $n$-tuples of star sets in $\R^n$, so Theorem~\ref{thm1} yields statements about positive additive functionals on $n$-tuples of star bodies in $\R^n$.  Star bodies are star sets with continuous radial functions and they have found many uses in the dual Brunn-Minkowski theory.  Theorem~B and related results hold when star sets are replaced by star bodies; see Theorem~\ref{thm2}.

Our results about positive additive functionals on finite products of $B_+(X)$ or $C_+(X)$, where $X$ is a compact Hausdorff space, may have some independent interest.  Of course, there is already much information in this direction in the literature.  In fact, we use a result of Stach\'{o} \cite[Theorem~7.1]{Sta08}, stated in Proposition~\ref{prp0}, as a springboard for our work.  Stach\'{o}'s theorem provides an integral representation for a continuous positive multilinear functional on $C(X)^n$, where $X$ is a locally compact Hausdorff space.  We are grateful to Laszlo Stach\'{o} for supplying Proposition~\ref{prp000} and to him and Fernando Bombal for helpful correspondence concerning results such as \cite[Theorem~7.1]{Sta08}, which arise in the representation of polymeasures.

The paper is organized as follows. After the preliminary Section~\ref{prelim}, we focus in Section~\ref{funsonPV} on additive functionals on finite products of partially ordered vector spaces.  The results are applied in Section~\ref{funsonCB}, where our results on positive additive functionals on $n$-tuples of continuous functions or of bounded Borel functions are proved.  These are applied in turn in Section~\ref{SorS}, which contains our characterizations of the dual mixed volume in terms of positive additive functionals.  The short Section~\ref{Direct} deals with real-valued additive functionals and in the Appendix we sketch a direct proof of a version of one of the characterizations of the dual mixed volume.

\section{Definitions and preliminaries}\label{prelim}

As usual, $S^{n-1}$ denotes the unit sphere and $o$ the origin in Euclidean
$n$-space $\R^n$, where shall assume that $n\ge 2$ throughout.

We denote by $1_X$ the characteristic function of any set $X$.  If $X$ is a subset of $\R^n$ and $t\in\R$, then $tX=\{tx:x\in X\}$.

As usual, $C(X)$ denotes the space of continuous real-valued functions on a topological space $X$ equipped with the $L_{\infty}$ norm.  We denote the set of bounded Borel functions on $X$ by $B(X)$.  Then $C_+(X)$ and $B_+(X)$ are the sets of nonnegative functions in $C(X)$ and $B(X)$, respectively.

We write ${\mathcal{H}}^k$ for $k$-dimensional Hausdorff measure in $\R^n$,
where $k\in\{1,\dots, n\}$. The notation $dz$ will always
mean $d{\mathcal{H}}^k(z)$ for the appropriate $k=1,\dots, n$.  In particular, for integrals over $S^{n-1}$, the symbol $du$ indicates integration with respect to spherical Lebesgue measure.

We denote by $l_x$ the line through the origin containing $x\in\R^n\setminus\{o\}$. A set $L$ in $\R^n$ is {\it star-shaped at $o$} if $L\cap l_u$ is either empty
or a (possibly degenerate) closed line segment for each $u\in
S^{n-1}$. If $L$ is star-shaped at $o$, we define its {\it radial
function} $\rho_L$ for $x\in \R^n\setminus\{o\}$ by
$$\rho_{L}(x)=\left\{\begin{array}{ll} \max\{c:cx\in L\}, &
\mbox{if $L\cap l_x\neq\emptyset$,}\\ 0, & \mbox{otherwise.}
\end{array}\right.
$$
This definition is a slight modification of \cite[(0.28)]{Gar06};
as defined here, the domain of $\rho_L$ is always $\R^n\setminus\{o\}$.  Radial functions are {\em homogeneous of degree $-$1}, that is,
$$\rho_L(rx)=r^{-1}\rho_L(x),$$
for all $x\in \R^n\setminus\{o\}$ and $r>0$, and this allows us to regard them as functions on $S^{n-1}$.

In this paper, a {\it star set} in $\R^n$ is a bounded Borel set that is star-shaped at $o$ and contains $o$, and a {\em star body} in $\R^n$ is a star set with a continuous radial function.  (Other definitions have been used; see, for example, \cite[Section~0.7]{Gar06} and \cite{GV}.)  Then $L$ is a star set (or star body) in $\R^n$ if and only if $\rho_L\in B_+(S^{n-1})$ (or $\rho_L\in C_+(S^{n-1})$, respectively).  We denote the class of star sets in $\R^n$ by ${\mathcal S}^n$ and the class of star bodies in $\R^n$ by ${\mathcal S}_o^n$.
Note that ${\mathcal S}^n$ is closed under finite unions, countable intersections, and intersections with (linear) subspaces.

The \emph{radial sum} $L=L_1\widetilde{+}\cdots\widetilde{+}L_m$ of $L_i\in {\mathcal S}^n$, $i=1,\dots,m$, is the star set with radial function
$$\rho_L=\rho_{L_1}+\cdots+\rho_{L_m}.$$

We recall the basics of Lutwak's dual Brunn-Minkowski theory.  Lutwak \cite{L1} worked with star bodies containing $o$ in their interiors, but it was noted in \cite{GVV} that with appropriate minor modifications, many of his definitions and results extend immediately to the class ${\mathcal S}^n$.  In particular, we can define the {\em dual mixed volume } $\widetilde{V}(L_1,\ldots,L_n)$ of
sets $L_1,\dots,L_n\in{\mathcal S}^n$ by
\begin{equation}\label{dmv}
\widetilde{V}(L_1,\ldots,L_n)=\frac1n\int_{S^{n-1}}\rho_{L_1}(u)
\rho_{L_2}(u)\cdots \rho_{L_n}(u)\,du.
\end{equation}

Lutwak \cite{L1} (see also \cite[Theorem~A.7.1]{Gar06}) found the
following analogue of Minkowski's theorem on mixed volumes.

\begin{prp}\label{7}
Let $L_i\in{\mathcal S}^n$, $i=1,\dots,m$.  If
$$L=t_1L_1\widetilde{+}\cdots\widetilde{+}t_mL_m,$$ where $t_i\ge
0$, then ${\mathcal{H}}^n(L)$ is a homogeneous polynomial of degree $n$ in the variables $t_i$, whose coefficients are dual mixed volumes. Specifically,
$${\mathcal{H}}^n(L)=\sum_{i_1=1}^m\cdots\sum_{i_n=1}^m\widetilde{V}(L_{i_1},\dots
,L_{i_n}) t_{i_1}\cdots t_{i_n}.$$
\end{prp}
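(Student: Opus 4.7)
The plan is to reduce the proposition to a direct computation using polar coordinates, by combining the additivity of radial functions under radial sum with the degree $-1$ homogeneity. First I would verify the identity
\[
\rho_L(u) = \sum_{i=1}^m t_i \, \rho_{L_i}(u) \qquad (u \in S^{n-1}).
\]
This follows immediately from the defining formula $\rho_{M_1 \widetilde{+} \cdots \widetilde{+} M_m} = \rho_{M_1} + \cdots + \rho_{M_m}$ recalled in Section~\ref{prelim}, together with $\rho_{t_i L_i}(u) = t_i \rho_{L_i}(u)$, which is a consequence of degree $-1$ homogeneity of $\rho_{L_i}$ for $t_i \geq 0$.

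Second, I would write the volume of a star set in polar form. Since each $L_i \in \mathcal{S}^n$ is a bounded Borel set whose intersection with every line $l_u$ through the origin is the segment from $o$ to $\rho_{L_i}(u)\, u$, the radial sum $L$ has the same property, and Fubini's theorem in polar coordinates gives
\[
\mathcal{H}^n(L) = \int_{S^{n-1}} \int_0^{\rho_L(u)} r^{n-1} \, dr \, du = \frac{1}{n} \int_{S^{n-1}} \rho_L(u)^n \, du.
\]
Substituting the expression for $\rho_L$ and expanding via the elementary identity
\[
\Bigl(\sum_{i=1}^m a_i\Bigr)^n = \sum_{i_1=1}^m \cdots \sum_{i_n=1}^m a_{i_1} \cdots a_{i_n},
\]
applied pointwise, yields
\[
\mathcal{H}^n(L) = \frac{1}{n} \sum_{i_1=1}^m \cdots \sum_{i_n=1}^m t_{i_1} \cdots t_{i_n} \int_{S^{n-1}} \rho_{L_{i_1}}(u) \cdots \rho_{L_{i_n}}(u) \, du,
\]
where the interchange of finite sum and integral is trivial. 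By definition (\ref{dmv}), each inner integral divided by $n$ is precisely $\widetilde{V}(L_{i_1}, \dots, L_{i_n})$, so the displayed expansion of $\mathcal{H}^n(L)$ in the statement follows.

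I do not expect a real obstacle; the argument is essentially bookkeeping once one has the polar coordinate formula and the compatibility of $\widetilde{+}$ with addition of radial functions. The only point that would warrant a sentence of care is that boundedness and Borel measurability of each $L_i$ ensure $\rho_{L_i} \in B_+(S^{n-1})$, so all the integrands are bounded Borel functions on $S^{n-1}$ and every integral is finite and well defined.
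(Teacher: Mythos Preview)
Your argument is correct and is essentially the standard proof due to Lutwak. Note that the paper does not actually prove this proposition: it is stated as a preliminary result with a citation to \cite{L1} (and \cite[Theorem~A.7.1]{Gar06}), so there is no in-paper proof to compare against. Your write-up matches Lutwak's original approach.
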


Lutwak showed that his definition (\ref{dmv}) of the dual mixed
volume $\widetilde{V}(L_{1},\dots ,L_{n})$ is compatible with the
previous theorem, and in particular $\widetilde{V}(L,\dots,L)={\mathcal{H}}^n(L)$.

The term {\em vector space} in this paper always means a real vector space.

Let $Y$ be a vector space.  A {\em cone} in $Y$ will always mean a pointed cone, that is, a subset of $Y$ closed under multiplication by nonnegative scalars.  A {\em convex cone} is a cone that is also closed under addition.  A {\em double cone} will mean a subset closed under arbitrary scalar multiplication.  The term {\em subspace} always means a linear subspace.

Let $Y$ be a vector space and let $n\in \N$.  If $T$ is a subspace of $Y$, a functional $F:T^n\rightarrow\R$ is called {\em multilinear} if it is linear in each variable.  Let $A$ be a subset of $Y$ that is closed under addition.  We say that $F$ is {\em additive} on $A^n$ if it is additive in each variable, i.e.,
$$F(v_1,\dots,v_{i-1},v_i+w_i,v_{i+1},\dots,v_n)=
F(v_1,\dots,v_n)+F(v_1,\dots,v_{i-1},w_i,v_{i+1},\dots,v_n),$$
whenever $v_i,w_i\in A$, $i=1,\dots,n$.  Let $C$ be double cone (or a cone) in $Y$.  Then $F$ is called {\em homogeneous} (or {\em positively homogeneous}, respectively) (of degree $1$) on $C^n$ if
$$F(t_1v_1,\dots, t_nv_n)=t_1\cdots t_n F(v_1,\dots,v_n),$$
for all $t_i\in \R$ (or $t_i\ge 0$, respectively) and $v_i\in C$, $i=1,\dots,n$.  Clearly, a functional is multilinear if and only if it is both additive and homogeneous.

Let $T$ be a subspace of $Y$. If $F:T^n\rightarrow\R$ is additive, then $F$ vanishes whenever one of its arguments is the zero vector.  Using this, it is easy to see that
\begin{equation}\label{newminus}
F(v_1,\dots,v_{i-1},-v_i,v_{i+1},\dots,v_n)=-F(v_1,\dots,v_{i-1},v_i,v_{i+1},\dots,v_n),
\end{equation}
whenever $v_i\in T$, $i=1,\dots,n$.  As a consequence, any positively homogeneous additive functional on $T^n$ is homogeneous. With (\ref{newminus}) in hand, it is straightforward to show that any additive functional $F$ on $T^n$ satisfies
\begin{equation}\label{Graf}
F(v_1,\dots,v_n)-F(w_1,\dots,w_n)=\sum_{i=1}^nF(w_1,\dots,w_{i-1},v_i-w_i,v_{i+1},
\dots,v_n),
\end{equation}
whenever $v_i, w_i\in T$, $i=1,\dots,n$, where the summands on the right-hand side are $F(v_1-w_1,v_2,\dots,v_n)$ when $i=1$ and $F(w_1,\dots,w_{n-1},v_n-w_n)$ when $i=n$.

Let $Y$ be a partially ordered vector space. The {\em positive cone} $Y_+$ of $Y$ is
$$Y_+=\{v\in Y:v\ge 0\}.$$
Let $n\in \N$ and let $E$ be a subset of $Y$. A functional is called {\em positive} on $E^n$ if $F(v_1,\dots,v_n)\ge 0$ whenever $v_i\in E\cap Y_+$, $i=1,\dots,n$, and {\em increasing} on $E^n$ if whenever $v_i,w_i\in E$ and $v_i\ge w_i$, $i=1,\dots,n$, we have
$$F(v_1,\dots,v_n)\ge F(w_1,\dots,w_n).$$

We call a functional $F$ defined on $E^n$, where $E$ is a set of functions on $S^{n-1}$, {\em rotation invariant} if
$$F(\phi f_1,\dots,\phi f_n)=F(f_1,\dots,f_n),$$
for all $f_1,\dots,f_n\in E$ and rotations $\phi$ of $S^{n-1}$.  Here $(\phi f_i)(u)=f_i(\phi^{-1}u)$, for all $u\in S^{n-1}$.

\begin{prp}\label{prp0}
{\em (Stach\'{o} \cite[Theorem~7.1]{Sta08})} Let $X$ be a locally compact Hausdorff space, let $n\in \N$, and let $F$ be a continuous positive multilinear functional on $C(X)^n$.  Then there is a finite Radon measure $\mu$ in $X^n$ such that
$$
F(f_1,\dots,f_n)=\int_{X^n}f_1(x_1)\cdots f_n(x_n)\,d\mu(x_1,\dots,x_n),
$$
for all $(f_1,\dots,f_n)\in C(X)^n$.
\end{prp}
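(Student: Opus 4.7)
The plan is to argue by induction on $n$. The base case $n=1$ is the classical Riesz--Markov--Kakutani representation theorem, which represents any continuous positive linear functional on $C(X)$ as integration against a unique finite Radon measure on $X$. In the locally compact setting $C(X)$ should be read as $C_0(X)$ (or $C_c(X)$), and positivity together with multilinearity forces the representing measures produced in the argument to be finite.

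For the inductive step, let $F$ be a continuous positive $n$-linear functional on $C(X)^n$. For each fixed $(f_1,\ldots,f_{n-1}) \in C_+(X)^{n-1}$, the map $f_n \mapsto F(f_1,\ldots,f_n)$ is a continuous positive linear functional on $C(X)$, so the base case gives a finite Radon measure $\nu_{f_1,\ldots,f_{n-1}}$ on $X$ with $F(f_1,\ldots,f_n)=\int_X f_n\,d\nu_{f_1,\ldots,f_{n-1}}$. For each fixed $\phi \in C(X)$ the map $(f_1,\ldots,f_{n-1}) \mapsto \int_X \phi\,d\nu_{f_1,\ldots,f_{n-1}} = F(f_1,\ldots,f_{n-1},\phi)$ is then continuous, positive, and $(n-1)$-linear, so the inductive hypothesis represents it by a finite Radon measure $\mu_\phi$ on $X^{n-1}$. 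The goal is to weld the family $\{\mu_\phi\}_\phi$ into a single finite Radon measure $\mu$ on $X^n$ so that
$$F(f_1,\ldots,f_n)=\int_{X^n} f_1(x_1)\cdots f_n(x_n)\,d\mu(x_1,\ldots,x_n).$$
For each Borel $E \subseteq X^{n-1}$, applying Riesz once more to the positive linear functional $\phi \mapsto \mu_\phi(E)$ on $C(X)$ (whose linearity in $\phi$ follows from uniqueness in the base case applied at each stage) produces a finite Radon measure $\lambda_E$ on $X$, and one sets $\mu(E \times B) := \lambda_E(B)$ for Borel $B \subseteq X$, extending by Carath\'eodory.

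The main obstacle is to show that this procedure actually yields a countably additive Radon measure on the product $\sigma$-algebra of $X^n$, rather than merely a bounded polymeasure (i.e. a set function separately $\sigma$-additive in each coordinate). In general such polymeasures do not extend to honest measures on the product, but positivity saves the situation here: for increasing sequences of products of indicators, monotone convergence combined with the Radon regularity coming from each successive Riesz step delivers $\sigma$-additivity on measurable rectangles, which Carath\'eodory then promotes to a finite Radon measure on $X^n$. Once $\mu$ is built, the representation formula holds for elementary tensors by construction and extends to all of $C(X)^n$ by continuity of both sides. An alternative, more functional-analytic route would instead use the universal property of the algebraic tensor product to regard $F$ as a linear functional on the image of $C(X)^{\otimes n}$ in $C(X^n)$, exploit the bound $|F(f_1,\ldots,f_n)| \le F(1_X,\ldots,1_X)\|f_1\|_\infty\cdots\|f_n\|_\infty$ (compact case) to see that this linear functional is bounded for the sup-norm, then combine Stone--Weierstrass density with a positive Hahn--Banach extension before invoking Riesz on $X^n$; the delicate step there is verifying positivity of the extension on the tensor cone, which once again rests on approximating nonnegative continuous functions on $X^n$ by nonnegative combinations of elementary tensors.
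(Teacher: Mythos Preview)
The paper does not give its own proof of this proposition; it quotes Stach\'{o} and notes only that his argument uses the Alaoglu--Bourbaki theorem (Banach--Alaoglu when $X$ is a compact metric space). That route---weak-$*$ compactness in the dual of $C(X^n)$---is quite different from your inductive construction via iterated Riesz--Markov representations.

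Your outline is plausible, but the step you yourself label the ``main obstacle'' is not actually discharged. Setting $\mu(E\times B)=\lambda_E(B)$ produces a positive \emph{bimeasure} on $X^{n-1}\times X$, i.e., a set function that is $\sigma$-additive separately in each factor. To invoke Carath\'eodory from the semiring of measurable rectangles you need $\sigma$-additivity on that semiring: whenever a rectangle is a countable disjoint union of rectangles, the values must add up. Separate countable additivity does \emph{not} imply this, and your appeal to ``monotone convergence combined with Radon regularity'' does not bridge the gap, since a general disjoint decomposition of a rectangle into rectangles is not organized as an increasing sequence in either factor. It is indeed a theorem that positive bimeasures on reasonable spaces extend to measures on the product, but proving that is essentially the whole content of the proposition, and you have not supplied the argument. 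Your alternative Stone--Weierstrass route has the same missing core: showing that the linear functional induced on $C(X)^{\otimes n}\subset C(X^n)$ is bounded (equivalently, positive) for the sup norm cannot be read off from the bound on elementary tensors, because a sum $\sum_k f_1^{(k)}\otimes\cdots\otimes f_n^{(k)}$ with small sup norm on $X^n$ carries no control on the individual summands, and nonnegative functions in $C(X^n)$ are not in general sums of nonnegative elementary tensors.
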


The proof of \cite[Theorem~7.1]{Sta08}, which appears in an appendix to that paper, is completely independent of the rest of \cite{Sta08} and accessible, though there are a few misprints: $\phi$ should be $\Phi$ in the first line of the statement of the theorem and on line 5 of page 21, and the equality sign on line 4 of page 21 should be less than or equal to.  A key tool is the Alaoglu-Bourbaki theorem. In the context of interest here, when $X=S^{n-1}$, the proof is somewhat simpler, since $X$ is a compact metric space. In particular, the standard Banach-Alaoglu theorem is sufficient.  It should be noted that Stach\'{o} writes at the end of the introduction of \cite{Sta08} that \cite[Theorem~7.1]{Sta08} is contained implicitly in a result of Villanueva \cite{Vil04}, while Fernando Bombal, in private communication, points to the earlier paper \cite{BomV01}.

\section{Additive functionals on finite products of partially ordered vector spaces}\label{funsonPV}

For lack of an explicit reference, we provide the proof of the following result. It follows that of the well-known case when $n=1$, i.e., the equivalence of continuity and boundedness for linear functionals.

\begin{prp}\label{prp00}
Let $Y$ be a partially ordered normed space. A multilinear functional $F$ on $Y^n$ (or $Y_+^n$) is continuous if and only if it is bounded, i.e., there is an $M$ such that
\begin{equation}\label{boundeq}
|F(v_1,\dots,v_n)|\le M\prod_{i=1}^n\|v_i\|,
\end{equation}
for all $(v_1,\dots,v_n)\in Y^n$ (or for all $(v_1,\dots,v_n)\in Y_+^n$, respectively).
\end{prp}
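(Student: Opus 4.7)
The plan is to adapt the classical equivalence between continuity and boundedness for linear functionals (the $n=1$ case) to the multilinear setting, with the telescoping identity (\ref{Graf}) playing the role that linearity plays in the scalar case.

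For the implication that boundedness implies continuity, I would fix $(v_1,\dots,v_n)\in Y^n$ and take a sequence $(v_1^{(k)},\dots,v_n^{(k)})$ converging to it in $Y^n$. Applying (\ref{Graf}) with $w_i:=v_i$ on the right-hand side and $v_i$ replaced by $v_i^{(k)}$ on the left, one writes
$$F(v_1^{(k)},\dots,v_n^{(k)})-F(v_1,\dots,v_n)=\sum_{i=1}^n F(v_1,\dots,v_{i-1},v_i^{(k)}-v_i,v_{i+1}^{(k)},\dots,v_n^{(k)}).$$
The bound (\ref{boundeq}) then controls each summand by $M\prod_{j<i}\|v_j\|\cdot\|v_i^{(k)}-v_i\|\cdot\prod_{j>i}\|v_j^{(k)}\|$. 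Since $\|v_i^{(k)}-v_i\|\to 0$ and the remaining norms are eventually bounded (each is at most $\|v_j\|+1$ for $k$ large), the right-hand side tends to $0$, giving continuity of $F$ at $(v_1,\dots,v_n)$.

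For the converse I would argue by contrapositive. If (\ref{boundeq}) fails for every $M$, then for each $k\in\N$ there exist $v_1^{(k)},\dots,v_n^{(k)}\in Y$ with $|F(v_1^{(k)},\dots,v_n^{(k)})|>k\prod_i\|v_i^{(k)}\|$; in particular every $\|v_i^{(k)}\|$ is nonzero. Setting $w_i^{(k)}:=v_i^{(k)}/(k^{1/n}\|v_i^{(k)}\|)$, multilinearity yields $|F(w_1^{(k)},\dots,w_n^{(k)})|>1$, while $\|w_i^{(k)}\|=k^{-1/n}\to 0$, so $(w_1^{(k)},\dots,w_n^{(k)})\to (0,\dots,0)$. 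Since $F$ vanishes as soon as any argument is zero (additivity in that variable), this contradicts continuity of $F$ at the origin. For the $Y_+^n$ version, the normalization uses only a positive scalar and so keeps the sequence inside $Y_+^n$, while the telescoping in the other direction needs mild care because $v_i^{(k)}-v_i$ need not lie in $Y_+$. I expect this last point to be the only real obstacle: the standard fix is to extend $F$ uniquely by linearity to a multilinear functional on $(Y_+-Y_+)^n$, check that (\ref{boundeq}) persists (up to a factor $2^n$) on this extension, and then apply the $Y^n$ argument; in the function-space settings where the proposition is later applied this extension is routine because $Y_+-Y_+=Y$ with the usual lattice decomposition $v=v^+-v^-$ satisfying $\|v^\pm\|\le\|v\|$.
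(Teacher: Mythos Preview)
Your argument for $Y^n$ is correct and close in spirit to the paper's, but the device you use for ``bounded $\Rightarrow$ continuous'' is different: you invoke the telescoping identity (\ref{Graf}) to write $F(v^{(k)})-F(v)$ as a sum of $n$ terms, each containing one factor $v_i^{(k)}-v_i$, whereas the paper expands $F(v_1+w_1,\dots,v_n+w_n)$ by multilinearity into $2^n$ terms and bounds the $2^n-1$ terms other than $F(v_1,\dots,v_n)$. Your telescoping is a little more economical on $Y^n$. The converse (continuous $\Rightarrow$ bounded) is essentially identical to the paper's: both amount to continuity at the origin plus homogeneity, yours phrased contrapositively.

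The real difference shows up in the $Y_+^n$ case, and here the paper's choice of expansion is doing work that your telescoping cannot. In the paper's $2^n$-term expansion each argument in every summand is either $v_i$ or $w_i$, never a difference; so if $v_i,w_i\in Y_+$ then every term lies in $Y_+^n$, the additivity and the bound (\ref{boundeq}) on $Y_+^n$ apply directly, and the paper can legitimately say ``the same proof applies'' without extending $F$ anywhere. Your telescoping, by contrast, forces $v_i^{(k)}-v_i$ into one slot, and this need not lie in $Y_+$. Your proposed repair---extend $F$ multilinearly to $(Y_+-Y_+)^n$ and check that the bound persists via $v=v^+-v^-$ with $\|v^{\pm}\|\le\|v\|$---imports Riesz-space structure that the proposition does not assume (only a partially ordered normed space is given). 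So your $Y_+^n$ argument, as written, does not prove the statement in the stated generality; it proves it only under the extra lattice hypothesis you mention. If you want to cover the general $Y_+^n$ case without that hypothesis, replace the telescoping step by the paper's $2^n$-term expansion.
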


\begin{proof}
Suppose that $F$ is bounded. If $v_i,w_i\in Y$, $i=1,\dots,n$, then the multilinearity of $F$ yields
$$F(v_1+w_1,\dots,v_n+w_n)=\sum_{\substack{r_i\in\{0,1\}\\i\in\{1,\dots,n\}}}
F(r_1v_1+(1-r_1)w_1,\dots,r_nv_n+(1-r_n)w_n).$$
Using (\ref{boundeq}), we obtain
\begin{eqnarray*}\lefteqn{|F(v_1+w_1,\dots,v_n+w_n)-F(v_1,\dots,v_n)|}\\
&=&
\left|\sum_{\substack{r_i\in\{0,1\}\\(r_1,\dots,r_n)\neq (1,\dots,1)\\i\in\{1,\dots,n\}}}F(r_1v_1+(1-r_1)w_1,\dots,r_nv_n+(1-r_n)w_n)
\right|\\
&\leq&\sum_{\substack{r_i\in\{0,1\}\\(r_1,\dots,r_n)\neq (1,\dots,1)\\i\in\{1,\dots,n\}}} \left|F\left(r_1v_1+(1-r_1)w_1,\dots,r_nv_n+(1-r_n)w_n\right)\right|\\
&\leq& M\max\{\|w_i\|: i=1,\dots,n\}\sum_{j=1}^n\prod_{\substack{r_i\in\{0,1\}\\i\in\{1,\dots,n\}
\\i\neq j}}\|r_iv_i+(1-r_i)w_i\|.
\end{eqnarray*}
Consequently,
$$\lim_{(w_1,\dots,w_n)\to (0,\dots,0)} |F(v_1+w_1,...,v_n+w_n)-F(v_1,...,v_n)|=0,$$
proving that $F$ is continuous.

Conversely, let $F$ be continuous. Then $F$ is continuous at $(0,\dots,0)$, so for all $\ee>0$, there exists a $\delta>0$ such that
$|F(w_1,\dots,w_n)|=|F(w_1,\dots,w_n)-F(0,\dots,0)|<\ee$, for all $(w_1,\dots,w_n)$ with $\|w_i\|\leq\delta$, $i=1,\dots,n$.  Using the homogeneity of $F$, it follows that whenever $v_i\neq 0$, $i=1,\dots,n$, we have
\begin{eqnarray*}|F(v_1,\dots,v_n)|
&=&\left|F\left(\delta\|v_1\|\frac{v_1}{\delta
\|v_1\|},\dots,\delta\|v_n\|\frac{v_n}
{\delta\|v_n\|}\right)\right|\\
&=&\frac{1}{\delta^n}\prod_{i=1}^n\|v_i\|\left|F\left(\delta\frac{v_1}
{\|v_1\|},\dots,\delta\frac{v_n}{\|v_n\|}\right)\right|<
\frac{\ee}{\delta^n}\prod_{i=1}^n\|v_i\|,
\end{eqnarray*}
for all $v_1,\dots,v_n\in Y$.  If $v_i=0$ for some $i=1,\dots,n$, the previous inequality holds trivially. This shows that $F$ is bounded.

The same proof applies when $Y$ is replaced by $Y_+$.
\end{proof}

\begin{lem}\label{addmon}
Let $Y$ be a partially ordered vector space and let $n\in \N$.  If $F:Y_+^n\to [0,\infty)$ is additive, then it is increasing on $Y_+^n$.
\end{lem}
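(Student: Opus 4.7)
The plan is to reduce monotonicity to additivity plus nonnegativity via a standard telescoping argument. The key observation is that if $v_i \geq w_i$ in the partial order on $Y$, then $v_i - w_i \in Y_+$ by definition of the partial order, so $v_i = w_i + (v_i - w_i)$ is an expression of $v_i$ as a sum of two elements of $Y_+$, to which additivity applies.

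First, I would establish the one-variable version: for any fixed $u_1,\dots,u_{i-1},u_{i+1},\dots,u_n \in Y_+$ and any $v_i, w_i \in Y_+$ with $v_i \geq w_i$, applying additivity in the $i$-th slot gives
\[
F(u_1,\dots,u_{i-1},v_i,u_{i+1},\dots,u_n) = F(u_1,\dots,u_{i-1},w_i,u_{i+1},\dots,u_n) + F(u_1,\dots,u_{i-1},v_i-w_i,u_{i+1},\dots,u_n).
\]
Since $v_i - w_i \in Y_+$ and all other entries are in $Y_+$, the last term lies in $[0,\infty)$ by positivity of the codomain, so the left side is at least the first term on the right.

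Then I would chain these one-variable inequalities: starting from $F(w_1,\dots,w_n)$, replace $w_1$ by $v_1$, then $w_2$ by $v_2$, and so on, obtaining
\[
F(w_1,w_2,\dots,w_n) \leq F(v_1,w_2,\dots,w_n) \leq F(v_1,v_2,w_3,\dots,w_n) \leq \cdots \leq F(v_1,\dots,v_n).
\]
At each stage the previously substituted entries $v_j$ ($j<i$) and the yet-to-be-substituted entries $w_j$ ($j>i$) all lie in $Y_+$, so the one-variable step applies.

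There is no real obstacle here; the only point of care is to verify that every intermediate tuple lies in $Y_+^n$ so that additivity in the relevant slot, combined with the nonnegativity of $F$ on $Y_+^n$, legitimately produces the inequality. This is immediate from the hypotheses.
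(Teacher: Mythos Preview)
Your proposal is correct and takes essentially the same approach as the paper: write $v_i = w_i + (v_i - w_i)$ with $v_i - w_i \in Y_+$ and use additivity together with the nonnegativity of $F$ on $Y_+^n$. The paper compresses this into a single line, observing directly that $F(v_1,\dots,v_n)=F(u_1+w_1,\dots,u_n+w_n)\ge F(w_1,\dots,w_n)$, whereas you organize the same idea as a telescoping chain; the content is identical.
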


\begin{proof}
Let $v_i, w_i\in Y_+$ be such that $v_i\ge w_i$, $i=1,\dots,n$.  Define $u_i\in Y_+$ by setting $u_i=v_i-w_i$, for $i=1,\dots,n$.  Then $v_i=u_i+w_i$, $i=1,\dots,n$, so using the additivity and the fact that $F\ge 0$ on $Y_+^n$, we obtain
$$
F(v_1,\dots,v_n)=F(u_1+w_1,\dots,u_n+w_n)\ge F(w_1,\dots,w_n).
$$
\end{proof}

\begin{lem}\label{addabs22}
Let $Y$ be a Riesz space and let $n\in \N$.  If $F$ is a positive multilinear functional on $Y^n$, then
\begin{equation}\label{newabs22}
|F(v_1,\dots,v_n)|\le F(|v_1|,\dots,|v_n|),
\end{equation}
whenever $v_i\in Y$, $i=1,\dots,n$.
\end{lem}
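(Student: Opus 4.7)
The plan is to exploit the Riesz decomposition $v_i = v_i^+ - v_i^-$, where $v_i^\pm \in Y_+$ and $|v_i| = v_i^+ + v_i^-$, together with multilinearity to reduce everything to a sum of nonnegative quantities.

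First, I would apply multilinearity of $F$ in each argument to expand
\[
F(v_1,\dots,v_n) = F(v_1^+ - v_1^-,\dots,v_n^+ - v_n^-) = \sum_{\varepsilon \in \{+,-\}^n} \operatorname{sgn}(\varepsilon)\, F(v_1^{\varepsilon_1},\dots,v_n^{\varepsilon_n}),
\]
where $\operatorname{sgn}(\varepsilon) = \prod_{i=1}^n \sigma(\varepsilon_i)$ with $\sigma(+) = 1$ and $\sigma(-) = -1$. Because each $v_i^{\varepsilon_i} \in Y_+$ and $F$ is positive, every one of these $2^n$ summands $F(v_1^{\varepsilon_1},\dots,v_n^{\varepsilon_n})$ is a nonnegative real number.

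Then, applying the ordinary triangle inequality in $\mathbb{R}$ and discarding the signs,
\[
|F(v_1,\dots,v_n)| \le \sum_{\varepsilon \in \{+,-\}^n} F(v_1^{\varepsilon_1},\dots,v_n^{\varepsilon_n}).
\]
Finally, I would reverse the multilinear expansion: since $|v_i| = v_i^+ + v_i^-$,
\[
F(|v_1|,\dots,|v_n|) = F(v_1^+ + v_1^-,\dots,v_n^+ + v_n^-) = \sum_{\varepsilon \in \{+,-\}^n} F(v_1^{\varepsilon_1},\dots,v_n^{\varepsilon_n}),
\]
where again multilinearity was used in each argument. Combining the previous two displays yields (\ref{newabs22}).

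There is essentially no obstacle here beyond bookkeeping; the argument is a routine generalization of the $n=1$ fact that a positive linear functional $\ell$ on a Riesz space satisfies $|\ell(v)| \le \ell(|v|)$, and the Riesz space hypothesis is used only to guarantee the existence of the positive and negative parts $v_i^\pm$. Note that the proof uses positivity of $F$ and multilinearity, but not any order-continuity or norm assumption on $Y$.
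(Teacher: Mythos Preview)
Your proof is correct and follows essentially the same approach as the paper: both expand $F(v_1,\dots,v_n)$ and $F(|v_1|,\dots,|v_n|)$ via the Riesz decomposition $v_i=v_i^+-v_i^-$ and multilinearity, then use positivity of the $2^n$ summands. The only cosmetic difference is that you bound $|F(v_1,\dots,v_n)|$ in one stroke via the triangle inequality, whereas the paper treats $F(v_1,\dots,v_n)$ and $-F(v_1,\dots,v_n)=F(-v_1,v_2,\dots,v_n)$ separately.
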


\begin{proof}
Since $Y$ is a Riesz space, i.e., a partially ordered vector space where the order structure is a lattice, for each $i\in \{1,\dots,n\}$ we have $v_i=v_i^+-v_i^-$, where $v_i^+$ and $v_i^-$ are the positive and negative parts of $v_i$. Then $v_i^+,v_i^-\ge 0$ and $|v_i|=v_i^++v_i^-$. Using the multilinearity of $F$, we obtain
\begin{eqnarray*}
F(|v_1|,\dots,|v_n|)&=&F(v_1^++v_1^-,\dots,v_n^++v_n^-)\\
&=&
\sum_{\substack{r_i\in\{0,1\}\\i\in\{1,\dots,n\}}}
F(r_1v_1^++(1-r_1)v_1^-,\dots,r_nv_n^++(1-r_n)v_n^-)
\end{eqnarray*}
and
\begin{eqnarray*}
F(v_1,\dots,v_n)&=&F(v_1^+-v_1^-,\dots,v_n^+-v_n^-)\\
&=&
\sum_{\substack{r_i\in\{0,1\}\\i\in\{1,\dots,n\}}}
(-1)^{(1-r_1)+\dots+(1-r_n)}F(r_1v_1^++(1-r_1)v_1^-,\dots,r_nv_n^++(1-r_n)v_n^-).
\end{eqnarray*}
Since all the arguments of $F$ in the previous two sums belong to $Y^+$, the positivity of $F$ implies that $F(v_1,\dots,v_n)\le F(|v_1|,\dots,|v_n|)$.  Similarly, $-F(v_1,\dots,v_n)=F(-v_1,v_2,\dots,v_n)\le F(|v_1|,\dots,|v_n|)$.
\end{proof}

The special case $Y=C(X)$ of the following result was communicated to us by Laszlo Stach\'{o}, who stated that the argument is ``rather standard".

\begin{prp}\label{prp000}
Let $Y$ be a normed Riesz space and let $F$ be a positive multilinear functional on $Y^n$ such that $F$ is continuous on $Y_+^n$. Then $F$ is continuous.
\end{prp}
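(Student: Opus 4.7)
The plan is to bootstrap global continuity from continuity on the positive cone by combining Proposition~\ref{prp00} (used in both directions) with the absolute-value estimate of Lemma~\ref{addabs22}.

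First I would apply Proposition~\ref{prp00} in its ``$Y_+^n$'' form to the restriction of $F$ to the positive cone. Since $F$ is positive and multilinear on $Y^n$, this restriction is an additive, positively homogeneous functional that is continuous on $Y_+^n$ by hypothesis, so the proposition produces a constant $M\ge 0$ with
$$|F(v_1,\dots,v_n)|\le M\prod_{i=1}^n\|v_i\|$$
for every $(v_1,\dots,v_n)\in Y_+^n$.

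Next, for arbitrary $v_1,\dots,v_n\in Y$, Lemma~\ref{addabs22} gives $|F(v_1,\dots,v_n)|\le F(|v_1|,\dots,|v_n|)$, and since each $|v_i|$ lies in $Y_+$ the bound of the previous step applies to the right-hand side:
$$|F(v_1,\dots,v_n)|\le M\prod_{i=1}^n\bigl\||v_i|\bigr\|.$$
The norm on a normed Riesz space is by definition a lattice norm, i.e.\ $|x|\le|y|$ implies $\|x\|\le\|y\|$; together with the identity $||v||=|v|$ this gives $\||v|\|=\|v\|$ for every $v\in Y$. Hence $F$ is bounded on all of $Y^n$ with the same constant $M$, and a second application of Proposition~\ref{prp00}, now in its $Y^n$ form, concludes that $F$ is continuous on $Y^n$.

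There is essentially no serious obstacle here: the proof is a two-sided use of the equivalence ``continuous $\Longleftrightarrow$ bounded'' of Proposition~\ref{prp00}, bridged by Lemma~\ref{addabs22}. The only point requiring care is the lattice-norm identity $\||v|\|=\|v\|$, which is what allows the bound on $Y_+^n$ to be transported to all of $Y^n$ without picking up any extra constant; without this Riesz-norm axiom the argument would not go through.
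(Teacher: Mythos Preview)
Your proof is correct. It uses the same two ingredients as the paper's argument---Proposition~\ref{prp00} (continuity $\Leftrightarrow$ boundedness) and Lemma~\ref{addabs22}---but organizes them more directly. The paper argues by contradiction: assuming $F$ is unbounded on $Y^n$, it picks unit vectors $v_i^{(k)}$ with $|F(v_1^{(k)},\dots,v_n^{(k)})|\ge 2^{kn}$, applies Lemma~\ref{addabs22}, and forms $w_i=\sum_{k}2^{-k}|v_i^{(k)}|$ to exhibit an element of $Y_+^n$ on which $F$ is unbounded, contradicting continuity there. Your direct route (continuous on $Y_+^n$ $\Rightarrow$ bounded on $Y_+^n$ $\Rightarrow$ bounded on $Y^n$ via Lemma~\ref{addabs22} and $\|\,|v|\,\|=\|v\|$ $\Rightarrow$ continuous on $Y^n$) is shorter and also sidesteps a minor technical point in the paper's version: the infinite series $\sum_{k}2^{-k}|v_i^{(k)}|$ need not converge in a normed Riesz space that is not complete, though that argument is easily repaired by working with finite partial sums.
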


\begin{proof}
Suppose that $F$ is not continuous on $Y^n$. By Proposition~\ref{prp00}, for each $k\in\N$, there are $ v^{(k)}_1,\ldots, v^{(k)}_n\in Y$ such that
$$
\left|F\left(v^{(k)}_1 ,\ldots,v^{(k)}_n\right)\right|\ge 2^{kn}
\prod_{i=1}^n \|v_i^{(k)}\|.
$$
Replacing $v^{(k)}_i$ by $v^{(k)}_i/\|v_i^{(k)}\|$ and using the fact that $F$ is positively homogeneous of degree 1, we may assume that this holds with $\|v_i^{(k)}\|=1$ for each $i=1,\dots,n$.  Then, using (\ref{newabs22}), we have
\begin{equation}\label{obs}
2^{kn} \le \left|F\left(v^{(k)}_1 ,\ldots,v^{(k)}_n\right)\right| \le
F\left(| v^{(k)}_1|,\ldots,| v^{(k)}_n|\right).
\end{equation}
For $i=1,\dots,n$, define
$$w_i= \sum_{k=1}^\infty 2^{-k}|v^{(k)}_i|.
$$
Then $w_i \in Y_+$ and $\|w_i\|\le 1$. (For the latter inequality, note that by the definition of a Riesz norm, $|u|\le |v|$ implies $\|u\|\le \|v\|$ for all $u,v\in Y$.  Applying this with $u=|v^{(k)}_i|$ and $v=v^{(k)}_i$, we obtain $\| |v^{(k)}_i|\|\le \|v^{(k)}_i\|$ for each $i=1,\dots,n$ and $k\in \N$.)  Since $F$ is positive and multilinear, it is also increasing on $Y_+^n$, by Lemma~\ref{addmon}.  Using these facts and (\ref{obs}), we obtain, for all $m\in \N$,
\begin{eqnarray*}
F\left(w_1,\ldots, w_n\right) & \ge &
F\left(\sum_{k=1}^m 2^{-k} |v^{(k)}_1 |, \ldots,
\sum_{k=1}^m 2^{-k} |v^{(k)}_n|\right)\\
& \ge &
\sum_{k=1}^m 2^{-kn} F\left(|v^{(k)}_1|,\ldots,|v^{(k)}_n|\right)
\ge  m\ge m \prod_{i=1}^n \|w_i\|.
\end{eqnarray*}
By Proposition~\ref{prp00}, $F$ is not continuous on $Y_+^n$, a contradiction.
\end{proof}

The argument in the next lemma is standard and in our context goes back at least to Firey \cite{Fir76}; this paper is referred to in the proof of \cite[Lemma~1]{MilS11}. In fact, Firey himself seems to refer to Aleksandrov \cite{Ale37a}.

\begin{lem}\label{addhomon}
Let $Y$ be a partially ordered vector space and let $n\in \N$.  If $F:Y_+^n\to [0,\infty)$ is additive, then it is positively homogeneous.
\end{lem}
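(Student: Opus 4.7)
The plan is to prove positive homogeneity in three stages, working one argument at a time and bootstrapping from integers to rationals to reals.

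First I would fix an index $i\in\{1,\dots,n\}$ and arguments $v_j\in Y_+$ for $j\neq i$, and define $G:Y_+\to[0,\infty)$ by $G(v)=F(v_1,\dots,v_{i-1},v,v_{i+1},\dots,v_n)$. Additivity of $F$ in the $i$th variable makes $G$ additive on $Y_+$, so a straightforward induction on $m\in\N$ gives $G(mv)=mG(v)$ for all $v\in Y_+$. Next, for $p,q\in\N$ and $v\in Y_+$, apply this integer homogeneity to $w=(p/q)v\in Y_+$ to obtain $qG((p/q)v)=G(pv)=pG(v)$, hence $G(rv)=rG(v)$ for every positive rational $r$; taking $r=0$ trivially gives $G(0)=0$ from additivity.

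The main step is to pass from $\Q_{\ge 0}$ to $\R_{\ge 0}$, and here the key tool is Lemma~\ref{addmon}, which tells us $G$ is increasing on $Y_+$. Given $t\in[0,\infty)$ and $v\in Y_+$, pick rationals $r_1,r_2$ with $0\le r_1\le t\le r_2$. Since $Y$ is a partially ordered vector space and $v\in Y_+$, we have $r_2v-tv=(r_2-t)v\in Y_+$ and similarly $tv-r_1v\in Y_+$, so $r_1v\le tv\le r_2v$. Applying the monotonicity of $G$ and then rational homogeneity gives
\[
r_1 G(v)=G(r_1v)\le G(tv)\le G(r_2v)=r_2G(v).
\]
Letting $r_1\uparrow t$ and $r_2\downarrow t$ forces $G(tv)=tG(v)$, i.e.,
\[
F(v_1,\dots,v_{i-1},tv_i,v_{i+1},\dots,v_n)=tF(v_1,\dots,v_n)
\]
for all $t\ge 0$ and $v_i\in Y_+$. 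Finally, applying this separately in each coordinate $i=1,\dots,n$ yields
\[
F(t_1v_1,\dots,t_nv_n)=t_1\cdots t_n F(v_1,\dots,v_n)
\]
for all $t_i\ge 0$ and $v_i\in Y_+$, as required.

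I do not anticipate any serious obstacle: the only nontrivial ingredient is monotonicity, which is already Lemma~\ref{addmon}, and the squeeze step only needs the elementary fact that in a partially ordered vector space, multiplication of a nonnegative element by a nonnegative scalar preserves the positive cone.
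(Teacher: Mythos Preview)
Your proof is correct and follows essentially the same approach as the paper: establish integer homogeneity from additivity, pass to rational homogeneity, then use the monotonicity from Lemma~\ref{addmon} to squeeze and obtain real homogeneity in each variable separately. The only cosmetic difference is that you package the argument via the auxiliary function $G$ and treat a general index $i$, whereas the paper works directly with the first variable.
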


\begin{proof}
Let $v_i\in Y_+$, $i=1,\dots,n$.  To prove that $F$ is positively homogeneous, it will suffice to show that if $t\ge 0$, then
$$F(tv_1,v_2\dots,v_n)= tF(v_1,\dots,v_n).$$
To see this, let $p\in \N$.  Since $pv_1=v_1+\cdots +v_1$, where the sum involves $p$ copies of $v_1$, the additivity of $F$ implies that
$$F(pv_1,v_2\dots,v_n)=F(v_1+\cdots +v_1,v_2\dots,v_n)= pF(v_1,\dots,v_n).$$
Therefore if $p,q\in \N$, then
$$qF((p/q)v_1,v_2\dots,v_n)=
pqF((1/q)v_1,v_2\dots,v_n)=
pF(v_1,\dots,v_n),$$
which yields
$$F((p/q)v_1,v_2,\dots,v_n)= (p/q)F(v_1,\dots,v_n).$$
Thus $F$ is positively homogeneous for rational factors.  Now let $t\ge 0$.  Let $(r_m)$ and $(s_m)$, $m\in \N$, be increasing (and decreasing, respectively) sequences of nonnegative rational numbers such that $r_m\rightarrow t$ and $s_m\rightarrow t$ as $m\rightarrow\infty$.  Using the positive homogeneity for rational factors and the fact that $F$ is increasing (a consequence of Lemma~\ref{addmon}), we obtain, for $m\in \N$,
\begin{eqnarray*}
r_mF(v_1,\dots,v_n)&=&F(r_mv_1,v_2,\dots,v_n)\\
&\le & F(tv_1,v_2,\dots,v_n)\\
&\le & F(s_mv_1,v_2,\dots,v_n)
=s_mF(v_1,\dots,v_n).
\end{eqnarray*}
Letting $m\rightarrow\infty$, we obtain
$$F(tv_1,v_2,\dots,v_n)=tF(v_1,\dots,v_n).$$
This completes the proof.
\end{proof}

\begin{lem}\label{lemthm1}
Let $Y$ be a Riesz space and let $n\in \N$. If $F:Y_+^n\to [0,\infty)$ is additive, there is an extension of $F$ to a positive multilinear functional on $Y^n$.
\end{lem}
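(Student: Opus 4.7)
The plan is to build the extension one variable at a time, using the Riesz decomposition $v=v^+-v^-$ in $Y$. Write $F^{(0)}:=F$, which is positively homogeneous on $Y_+^n$ by Lemma~\ref{addhomon}, and, having constructed $F^{(k)}:Y^k\times Y_+^{n-k}\to\R$ that is linear in its first $k$ slots and both additive and positively homogeneous in the last $n-k$, extend to slot $k+1$ by
\[
F^{(k+1)}(v_1,\dots,v_n):=F^{(k)}(v_1,\dots,v_k,v_{k+1}^+,v_{k+2},\dots,v_n)-F^{(k)}(v_1,\dots,v_k,v_{k+1}^-,v_{k+2},\dots,v_n).
\]
After $n$ such steps, $F^{(n)}$ is the desired multilinear extension on $Y^n$; positivity is automatic because $F^{(n)}=F$ on $Y_+^n$ (the negative parts vanish there, and an additive functional vanishes whenever one of its arguments is $0$).

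The key inductive claim is $\R$-linearity of $F^{(k+1)}$ in $v_{k+1}$. The crucial observation is that if $v_{k+1}=u-w$ with $u,w\in Y_+$, then $u+v_{k+1}^-=w+v_{k+1}^+$, and this is a sum in $Y_+$; applying additivity of $F^{(k)}$ in its $(k+1)$-st slot on $Y_+$ yields
\[
F^{(k+1)}(\dots,v_{k+1},\dots)=F^{(k)}(\dots,u,\dots)-F^{(k)}(\dots,w,\dots),
\]
so the value depends on $v_{k+1}$ alone and not on the chosen positive decomposition. Additivity in $v_{k+1}$ then follows by applying this identity to $v_{k+1}+v_{k+1}'=(v_{k+1}^++(v_{k+1}')^+)-(v_{k+1}^-+(v_{k+1}')^-)$; positive homogeneity follows from $(tv)^{\pm}=tv^{\pm}$ for $t\ge 0$ together with the positive homogeneity of $F^{(k)}$ in the relevant slot; and $F^{(k+1)}(\dots,-v_{k+1},\dots)=-F^{(k+1)}(\dots,v_{k+1},\dots)$ is immediate from $(-v)^+=v^-$ and $(-v)^-=v^+$. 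Together these give full $\R$-linearity of $F^{(k+1)}$ in slot $k+1$.

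Linearity in slots $1,\dots,k$ and both additivity and positive homogeneity in slots $k+2,\dots,n$ pass automatically from $F^{(k)}$ to $F^{(k+1)}$, since the latter is just a difference of two evaluations of $F^{(k)}$ in which those slots are untouched. I do not anticipate a genuine obstacle beyond keeping the bookkeeping straight through the chain $F^{(0)}\to F^{(1)}\to\cdots\to F^{(n)}$; the real content is concentrated in the elementary identity $u+v^-=w+v^+$ (a direct consequence of $v=u-w=v^+-v^-$), combined with the additivity hypothesis on $F$ that we propagate at each step.
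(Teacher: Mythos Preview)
Your proof is correct and follows essentially the same approach as the paper: both use the Riesz decomposition $v=v^+-v^-$, the identity $u+v^-=w+v^+$ (for any positive decomposition $v=u-w$) to establish well-definedness, and Lemma~\ref{addhomon} for positive homogeneity. The only difference is organizational---the paper writes the extension on $Y^n$ in one step via the $2^n$-term signed sum $\sum_{r\in\{0,1\}^n}(-1)^{|r|}F(v_1^{(r_1)},\dots,v_n^{(r_n)})$ and then checks well-definedness and multilinearity coordinate by coordinate, whereas you build the same object inductively one slot at a time; the underlying computations coincide.
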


\begin{proof}
Suppose that $v_i\in Y$, $i=1,\dots,n$.  Since $Y$ is a Riesz space, we can write $v_i=v_i^+-v_i^-$, for each $i$, where $v_i^+\in Y_+$ and $v_i^-\in Y_+$ are the positive and negative parts of $v_i$.  Now if $v_i=v_i^{(0)}-v_i^{(1)}$, where $v_i^{(0)}, v_i^{(1)}\in Y_+$, for $i=1,\dots,n$,
we define
\begin{equation}\label{defi}
F(v_1,\dots,v_n)=\sum_{r_1,\dots,r_n\in\{0,1\}} (-1)^{r_1+\dots+r_n}F\left(v_1^{(r_1)},\dots,v_n^{(r_n)}\right).
\end{equation}

We claim that the extension of $F$ to $Y^n$ via (\ref{defi}) is well defined.  (The definition (\ref{defi}) and the following argument are also standard and analogous to those in \cite[p.~285]{Sch93}, for example.)   To see this, let $i\in\{1,\dots,n\}$ and suppose that
$$v_i=v_i^{(0)}-v_i^{(1)}=w_i^{(0)}-w_i^{(1)},$$
where $v_i^{(0)}, v_i^{(1)},w_i^{(0)}, w_i^{(1)}\in Y_+$.  Then
$$v_i^{(0)}+w_i^{(1)}=w_i^{(0)}+v_i^{(1)}.$$
Therefore
\begin{eqnarray*}
\lefteqn{F(v_1,\dots,v_{i-1},v_i^{(0)},v_{i+1},\dots,v_n)+
F(v_1,\dots,v_{i-1},w_i^{(1)},v_{i+1},\dots,v_n)}\\
&=& \sum_{r_j\in\{0,1\},\,j\neq i;\,r_i=0} (-1)^{r_1+\dots+r_n}F\left(v_1^{(r_1)},\dots,
v_{i-1}^{(r_{i-1})},v_i^{(0)},v_{i+1}^{(r_{i+1})},\dots,v_n^{(r_n)}\right)\\
&  &+\sum_{r_j\in\{0,1\},\, j\neq i;\, r_i=0} (-1)^{r_1+\dots+r_n}F\left(v_1^{(r_1)},\dots,
v_{i-1}^{(r_{i-1})},w_i^{(1)},v_{i+1}^{(r_{i+1})},\dots,v_n^{(r_n)}\right)\\
& =& \sum_{r_j\in\{0,1\},\, j\neq i; \,r_i=0} (-1)^{r_1+\dots+r_n}F\left(v_1^{(r_1)},\dots,
v_{i-1}^{(r_{i-1})},v_i^{(0)}+w_i^{(1)},v_{i+1}^{(r_{i+1})},\dots,
v_n^{(r_n)}\right)\\
& =& \sum_{r_j\in\{0,1\},\, j\neq i; \,r_i=0} (-1)^{r_1+\dots+r_n}F\left(v_1^{(r_1)},\dots,
v_{i-1}^{(r_{i-1})},w_i^{(0)}+v_i^{(1)},v_{i+1}^{(r_{i+1})},
\dots,v_n^{(r_n)}\right)\\
&=& F(v_1,\dots,v_{i-1},w_i^{(0)},v_{i+1},\dots,v_n)+
F(v_1,\dots,v_{i-1},v_i^{(1)},v_{i+1},\dots,v_n),\\
\end{eqnarray*}
which yields
\begin{eqnarray*}
\lefteqn{F(v_1,\dots,v_{i-1},v_i^{(0)},v_{i+1},\dots,v_n)-
F(v_1,\dots,v_{i-1},v_i^{(1)},v_{i+1},\dots,v_n)}\\
& = & F(v_1,\dots,v_{i-1},w_i^{(0)},v_{i+1},\dots,v_n)-
F(v_1,\dots,v_{i-1},w_i^{(1)},v_{i+1},\dots,v_n).
\end{eqnarray*}
This suffices to prove the claim.

Note that the extension of $F$ defined by (\ref{defi}) is positive on $Y^n$, since it is positive on $Y_+^n$.

Next, we claim that $F$ as defined by (\ref{defi}) is multilinear.  Indeed, let $i\in\{1,\dots,n\}$, $v_i=v_i^{(0)}-v_i^{(1)}$, and $w_i=w_i^{(0)}-w_i^{(1)}$, where $v_i^{(0)}, v_i^{(1)}, w_i^{(0)}, w_i^{(1)}\in Y_+$.  Then
$$v_i+w_i=\left(v_i^{(0)}+w_i^{(0)}\right)-\left(v_i^{(1)}+w_i^{(1)}\right),$$ so
\begin{eqnarray*}
\lefteqn{F(v_1,\dots,v_{i-1},v_i+w_i,v_{i+1},\dots,v_n)}\\
&=&
\sum_{r_1,\dots,r_n\in\{0,1\}} (-1)^{r_1+\dots+r_n}F\left(v_1^{(r_1)},
\dots,v_{i-1}^{(r_{i-1})},v_i^{(r_i)}+w_i^{(r_i)},v_{i+1}^{(r_{i+1})},
\dots,v_{n}^{(r_{n})}\right)\\
&=&
F(v_1,\dots,v_{i-1},v_i,v_{i+1},\dots,v_n)+
F(v_1,\dots,v_{i-1},w_i,v_{i+1},\dots,v_n).
\end{eqnarray*}
Therefore $F$ is additive.

Let $\alpha\in\R$.  If $\alpha\ge 0$, we have $\alpha v_i=\alpha v_i^{(0)}-\alpha v_i^{(1)}$.  Noting that each of the variables in the summands in (\ref{defi}) are vectors in $Y_+$ and using (\ref{defi}) and Lemma~\ref{addhomon}, we obtain
$$
F(v_1,\dots,v_{i-1},\alpha v_i,v_{i+1},\dots,v_n)=
\alpha F(v_1,\dots,v_{i-1},v_i,v_{i+1},\dots,v_n).
$$
Thus $F$ is positively homogeneous on $Y^n$ and therefore, as noted after (\ref{newminus}), homogeneous.  Together with the additivity, this implies that $F$ is multilinear, so the claim is proved.
\end{proof}

\section{Positive additive functionals on $\left(C_+(S^{n-1})\right)^n$ or $\left(B_+(S^{n-1})\right)^n$}\label{funsonCB}

\begin{lem}\label{lemthm2}
Let $X$ be a compact Hausdorff space and let $n\in \N$. If $F:\left(C_+(X)\right)^n\to [0,\infty)$ is additive, then it is continuous.
\end{lem}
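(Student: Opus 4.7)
The plan is to combine the machinery already developed in the paper: additivity upgrades to positive homogeneity and monotonicity on the positive cone, the monotonicity lets us dominate $F$ on norm-bounded arguments by its value at the constant function $1_X$, and then the boundedness criterion of Proposition~\ref{prp00} delivers continuity.

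More concretely, first I would invoke Lemma~\ref{addhomon} applied to $Y=C(X)$ (a partially ordered vector space under the pointwise order) to conclude that $F$ is positively homogeneous on $(C_+(X))^n$. Together with additivity this means $F$ is multilinear in the sense required by the $Y_+^n$-version of Proposition~\ref{prp00}. Next, by Lemma~\ref{addmon}, $F$ is increasing on $(C_+(X))^n$.

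The core estimate is then immediate from compactness of $X$: every $f\in C_+(X)$ satisfies $0\le f\le \|f\|_\infty\, 1_X$ pointwise, where $1_X\in C_+(X)$ because $X$ is compact (so $1_X$ is bounded and continuous). Applying monotonicity in each variable and then positive homogeneity gives, for all $f_1,\dots,f_n\in C_+(X)$,
\begin{equation*}
0\le F(f_1,\dots,f_n)\le F\bigl(\|f_1\|_\infty 1_X,\dots,\|f_n\|_\infty 1_X\bigr)
= M\prod_{i=1}^n \|f_i\|_\infty,
\end{equation*}
with $M:=F(1_X,\dots,1_X)\in[0,\infty)$. This is exactly the bound (\ref{boundeq}) required by Proposition~\ref{prp00} on $Y_+^n$, so that proposition yields continuity of $F$ on $(C_+(X))^n$.

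There is no serious obstacle here; the only point that needs a moment's care is that Proposition~\ref{prp00} is stated for multilinear functionals, and on the cone $Y_+^n$ ``multilinear'' should be read as ``additive and positively homogeneous in each variable'', which is precisely what Lemmas~\ref{addmon} and~\ref{addhomon} grant us. The use of compactness of $X$ (to ensure $1_X\in C_+(X)$ and $\|\cdot\|_\infty$ is finite) is essential and is the only place the hypothesis on $X$ enters.
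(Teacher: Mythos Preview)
Your approach is sound and more streamlined than the paper's. The paper proves continuity by a direct sandwich argument: given $f^{(j)}\to f$ and $0<\ee\le 1$, eventually $f_i\le f_i^{(j)}+\ee$ and $f_i^{(j)}\le f_i+\ee$ pointwise, and then monotonicity together with an additive expansion of $F(f_1^{(j)}+\ee,\dots,f_n^{(j)}+\ee)$ bounds $|F(f^{(j)})-F(f)|$ by $(2^n-1)M\ee$ with $M=F(f_1+1,\dots,f_n+1)$. You instead extract the single estimate $0\le F(f_1,\dots,f_n)\le F(1_X,\dots,1_X)\prod_i\|f_i\|_\infty$ and appeal to Proposition~\ref{prp00}; this is the cleaner packaging of the same idea.

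One caveat worth flagging: the paper's proof of the $Y_+^n$ case of Proposition~\ref{prp00} in the direction bounded $\Rightarrow$ continuous literally only establishes $F(v+w)\to F(v)$ as $w\to 0$ with $w\in Y_+^n$, i.e.\ continuity \emph{from above}; a general sequence $v^{(j)}\to v$ in $Y_+^n$ need not satisfy $v^{(j)}\ge v$, so ``the same proof applies'' glosses over a step. For $Y=C(X)$ this is easily patched---either interpolate through $z^{(j)}=v\vee v^{(j)}$ using the lattice structure, or (more cleanly) first extend $F$ to a positive multilinear functional on $C(X)^n$ via Lemma~\ref{lemthm1}, transfer your bound to all of $C(X)^n$ using Lemma~\ref{addabs22} (giving $|F(f_1,\dots,f_n)|\le F(|f_1|,\dots,|f_n|)\le M\prod_i\|f_i\|_\infty$), and then invoke Proposition~\ref{prp00} on $Y^n$, where its proof is unproblematic. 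With that small tweak your argument actually yields continuity of the extension on all of $C(X)^n$, which short-circuits the paper's separate appeal to Proposition~\ref{prp000} in the proof of Theorem~\ref{thm1}.
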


\begin{proof}
Let $F:\left(C_+(X)\right)^n\to [0,\infty)$ be additive.  By Lemmas~\ref{addmon} and~\ref{addhomon} with $Y=C(X)$, $F$ is also increasing and positively homogeneous.
Fix $f=(f_1,\dots,f_n)\in \left(C_+(X)\right)^n$ and  suppose that $f^{(j)}=(f_1^{(j)},\dots,f_n^{(j)})$, $j\in \N$, is a sequence in $\left(C_+(X)\right)^n$ converging to $f$.  Let $0<\ee\le 1$ be given.  Choose $j_0$ such that $\|f_i-f^{(j)}_i\|_{\infty}\le\ee$ for all $j\ge j_0$ and all $i=1,\dots,n$.  Define
$$M=F(f_1+1,\dots,f_n+1)$$
and
$$M_j=\max\left\{F(h_1,\dots,h_n): h_i=f_i^{(j)}~{\text{or}}~h_i=1, ~i=1,\dots,n\right\},$$
for $j\in \N$. From the facts that $F$ is increasing and that $f_i\ge 0$ and $f_i^{(j)}\le f_i+\ee\le f_i+1$ for all $j\ge j_0$ and $i=1,\dots,n$, we obtain $M_j\le M$, for $j\ge j_0$.
Using the additivity and positive homogeneity of $F$, we can expand the quantity $F(f^{(j)}_1+\ee,\dots,f^{(j)}_n+\ee)$ into an expression involving $2^n$ terms, each of the form $\ee^kF(h_1,\dots,h_n)$, where $h_i=f_i^{(j)}$ or $h_i=1$ and $k\ge 1$ for all but one term.  Recalling that $\ee\le 1$, this gives
$$F(f) \le F(f^{(j)}_1+\ee,\dots,f^{(j)}_n+\ee)\le F(f^{(j)})+\ee (2^{n}-1)M_j\le F(f^{(j)})+\ee (2^{n}-1)M,
$$
for all $j\ge j_0$.   Similarly, we get
$$F(f^{(j)}) \le F(f_1+\ee,\dots,f_n+\ee)\le F(f)+\ee (2^{n}-1)M,
$$
for all $j\ge j_0$. This shows that $F(f^{(j)})\rightarrow F(f)$ as $j\rightarrow\infty$ and proves the claim.
\end{proof}

\begin{thm}\label{thm1}
Let $X$ be a compact Hausdorff space, let $n\in \N$, and let $F:\left(C_+(X)\right)^n\to [0,\infty)$ be additive.  Then there is a finite Radon measure $\mu$ in $X^n$ such that
\begin{equation}\label{1a}
F(f_1,\dots,f_n)=\int_{X^n}f_1(x_1)\cdots f_n(x_n)\,d\mu(x_1,\dots,x_n),
\end{equation}
for all $(f_1,\dots,f_n)\in \left(C_+(X)\right)^n$.
\end{thm}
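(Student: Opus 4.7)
The plan is to assemble the machinery of Section~\ref{funsonPV} and Lemma~\ref{lemthm2}, and then invoke Stach\'{o}'s representation theorem (Proposition~\ref{prp0}) to produce the measure $\mu$.

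First, I would observe that $C(X)$ equipped with the supremum norm is a normed Riesz space with positive cone $C(X)_+ = C_+(X)$. Since $F$ is additive from $(C_+(X))^n$ into $[0,\infty)$, Lemma~\ref{addmon} (applied with $Y=C(X)$) shows that $F$ is increasing on $(C_+(X))^n$, and Lemma~\ref{addhomon} shows that $F$ is positively homogeneous. Being additive and positively homogeneous, $F$ is multilinear on $(C_+(X))^n$ in the sense appropriate to the positive cone.

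Next, I would apply Lemma~\ref{lemthm1} (again with $Y=C(X)$) to extend $F$ to a positive multilinear functional, still denoted $F$, on $C(X)^n$. By Lemma~\ref{lemthm2}, the original $F$ is continuous on $(C_+(X))^n$; equivalently, $F$ is continuous on $C(X)_+^n$ in the norm topology. Proposition~\ref{prp000} then upgrades this to continuity of $F$ on all of $C(X)^n$, since $C(X)$ is a normed Riesz space and $F$ is positive multilinear on $C(X)^n$.

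Finally, $X$ being compact Hausdorff is in particular locally compact Hausdorff, so Stach\'{o}'s theorem (Proposition~\ref{prp0}) applies to the continuous positive multilinear functional $F$ on $C(X)^n$, yielding a finite Radon measure $\mu$ on $X^n$ with
\[
F(f_1,\dots,f_n)=\int_{X^n}f_1(x_1)\cdots f_n(x_n)\,d\mu(x_1,\dots,x_n)
\]
for every $(f_1,\dots,f_n)\in C(X)^n$. Restricting to $(C_+(X))^n$ gives~(\ref{1a}).

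The argument is essentially an assembly of prior results, so I do not anticipate a genuine obstacle; the only point requiring a moment of care is verifying that the continuity produced by Lemma~\ref{lemthm2} is precisely the hypothesis required by Proposition~\ref{prp000} (namely, continuity of the positive multilinear extension when restricted to the positive cone), which it is, and that Stach\'{o}'s theorem applies to $X$ regarded as a locally compact Hausdorff space. No further analysis is needed.
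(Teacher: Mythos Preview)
Your proposal is correct and follows essentially the same route as the paper's proof: extend via Lemma~\ref{lemthm1}, establish continuity on the positive cone via Lemma~\ref{lemthm2}, upgrade to full continuity via Proposition~\ref{prp000}, and then apply Stach\'{o}'s theorem (Proposition~\ref{prp0}). The only difference is that you spell out the preliminary invocations of Lemmas~\ref{addmon} and~\ref{addhomon} and the compact-implies-locally-compact remark, which the paper leaves implicit.
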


\begin{proof}
By Lemma~\ref{lemthm1} with $Y=C(X)$, $F$ extends to a positive multilinear functional on $\left(C(X)\right)^n$ which is continuous on $\left(C_+(X)\right)^n$ by Lemma~\ref{lemthm2}.  By Proposition~\ref{prp000} with $Y=C(X)$, this extension is continuous on $\left(C(X)\right)^n$ and then by Proposition~\ref{prp0}, it has the integral representation (\ref{1a}).
\end{proof}

\begin{cor}\label{corthm1}
Let $X$ be a compact Hausdorff space and let $n\in \N$.  Suppose that $F:\left(C_+(X)\right)^n\to [0,\infty)$ is additive and vanishes when the supports of two of its arguments are disjoint. Then there is a finite Radon measure $\mu$ in $X$ such that
\begin{equation}\label{11}
F(f_1,\dots,f_n)=\int_{X}f_1(x)\cdots f_n(x)\,d\mu(x),
\end{equation}
for all $(f_1,\dots,f_n)\in \left(C_+(X)\right)^n$.
\end{cor}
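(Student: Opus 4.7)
The plan is to invoke Theorem~\ref{thm1} to obtain a finite Radon measure $\mu^*$ on $X^n$ representing $F$, and then to use the disjoint-supports hypothesis to force $\mu^*$ to be concentrated on the diagonal $\Delta=\{(x,\dots,x):x\in X\}$. Pushing forward along the diagonal embedding will then produce a finite Radon measure $\mu$ on $X$ satisfying~(\ref{11}).

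The key lemma I would prove first is this: for each pair $i\ne j$, $\mu^*(K_1\times\cdots\times K_n)=0$ whenever $K_1,\dots,K_n\subset X$ are compact and $K_i\cap K_j=\emptyset$. Since $X$ is compact Hausdorff and therefore normal, a standard Urysohn argument (using an intermediate shrinking of disjoint open neighborhoods of $K_i$ and $K_j$) produces $g_i,g_j\in C_+(X)$ with $g_i\ge 1_{K_i}$, $g_j\ge 1_{K_j}$, and $\mathrm{supp}\,g_i\cap\mathrm{supp}\,g_j=\emptyset$; taking $g_k\equiv 1$ for $k\ne i,j$, the hypothesis gives $F(g_1,\dots,g_n)=0$. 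Since the integrand in~(\ref{1a}) is nonnegative and at least $1_{K_1\times\cdots\times K_n}$, this forces $\mu^*(K_1\times\cdots\times K_n)=0$.

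Next I would upgrade this to $\mu^*(A_{ij})=0$ for every pair $i\ne j$, where $A_{ij}=\{(x_1,\dots,x_n)\in X^n:x_i\ne x_j\}$. By inner regularity of the Radon measure $\mu^*$ it suffices to handle compact $K\subset A_{ij}$. Hausdorffness provides, at each point of $K$, a basic open neighborhood $V_1\times\cdots\times V_n$ with $V_i\cap V_j=\emptyset$; finitely many such boxes cover $K$ by compactness. For any such box, inner regularity approximates its $\mu^*$-measure from below by compact sets $C\subset V_1\times\cdots\times V_n$; since $C\subset\pi_1(C)\times\cdots\times\pi_n(C)$ with the compact projections $\pi_i(C)\subset V_i$ and $\pi_j(C)\subset V_j$ disjoint, the key lemma gives $\mu^*(C)=0$. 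Subadditivity then yields $\mu^*(K)=0$, hence $\mu^*(A_{ij})=0$. Intersecting over all pairs places $\mu^*$ on $\Delta$.

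Finally, $\Delta$ is closed in the Hausdorff space $X^n$ and the map $x\mapsto(x,\dots,x)$ is a homeomorphism from $X$ onto $\Delta$, so pushing $\mu^*|_\Delta$ forward along its inverse gives a finite Radon measure $\mu$ on $X$; restricting the integral in~(\ref{1a}) to $\Delta$ and changing variables produces~(\ref{11}). The main obstacle I anticipate is the covering and projection step that reduces $\mu^*(A_{ij})=0$ to the key lemma about compact rectangles; the Urysohn construction and the concluding pushforward are routine once the integral representation on $X^n$ is in hand and $\mu^*$ is known to live on the diagonal.
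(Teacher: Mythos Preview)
Your proposal is correct and follows essentially the same approach as the paper: invoke Theorem~\ref{thm1} to get a Radon measure on $X^n$, use the disjoint-supports hypothesis together with Urysohn functions to show this measure is concentrated on the diagonal, and then pass to a measure on $X$. The only cosmetic differences are that the paper works directly with open rectangles (showing each off-diagonal point has an open product neighborhood of measure zero) rather than going through compact rectangles first, and it defines $\mu$ via the coordinate projection $\mu(E)=\nu(E\times X\times\cdots\times X)$ instead of the pushforward along the diagonal inverse---both yielding the same measure once $\nu$ lives on $\Delta$.
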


\begin{proof}
By Theorem~\ref{thm1}, there is a finite Radon measure $\nu$ in $X^n$ such that (\ref{1a}) holds with $\mu$ replaced by $\nu$. Suppose that $(x_1,\dots,x_n)\in X^n$ is such that $x_{i_1}\neq x_{i_2}$ for some $1\le i_1\neq i_2\le n$.  Choose open sets $U_i$ in $X$ with $x_i\in U_i$, $i=1,\dots,n$, such that the closures of  $U_{i_1}$ and $U_{i_2}$ are disjoint.  Define $f_i\in C_+(X)$ such that $f_i(x)=1$ for all $x\in U_i$, and the supports of $f_{i_1}$ and $f_{i_2}$ are disjoint.  Then we have
\begin{eqnarray*}
0&=&F(f_1,\dots,f_n)=\int_{X^n}f_1(x_1)\cdots f_n(x_n)\,d\nu(x_1,\dots,x_n)\\
&\ge &\int_{X^n}1_{U_1}(x_1)\cdots 1_{U_n}(x_n)\,d\nu(x_1,\dots,x_n)=\nu\left(\prod_{i=1}^n U_i\right).
\end{eqnarray*}
Thus each $(x_1,\dots,x_n)\in X^n$ not on the diagonal in $X^n$ has an open neighborhood of zero $\nu$-measure. It follows that $\nu$ is concentrated on the diagonal in $X^n$.  Let $\mu$ be the projection of $\nu$ onto $X$, defined by $\mu(E)=\nu(E\times X\times\cdots\times X)$, for all Borel sets $E$ in $X$.  Then $\mu$ is a finite Radon measure in $X$ and
$$F(f_1,\dots,f_n)=\int_{X^n}f_1(x_1)\cdots f_n(x_n)\,d\nu(x_1,\dots,x_n)=\int_{X}f_1(x)\cdots f_n(x)\,d\mu(x),
$$
for all $(f_1,\dots,f_n)\in \left(C_+(X)\right)^n$, as required.
\end{proof}

\begin{cor}\label{cor2thm1}
If $F:\left(C_+(S^{n-1})\right)^n\to [0,\infty)$ is an additive, rotation invariant functional that vanishes when the supports of two of its arguments are disjoint, then there is a $c\ge 0$ such that
$$
F(f_1,\dots,f_n)=c\int_{S^{n-1}}f_1(u)\cdots f_n(u)\,du,
$$
for all $(f_1,\dots,f_n)\in \left(C_+(S^{n-1})\right)^n$.
\end{cor}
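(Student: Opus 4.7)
The plan is to reduce this to Corollary~\ref{corthm1} and then leverage the uniqueness of rotation-invariant measures on $S^{n-1}$. Since $X = S^{n-1}$ is a compact Hausdorff space and $F$ is additive and vanishes when the supports of two of its arguments are disjoint, Corollary~\ref{corthm1} produces a finite Radon measure $\mu$ on $S^{n-1}$ such that
$$
F(f_1,\dots,f_n)=\int_{S^{n-1}}f_1(u)\cdots f_n(u)\,d\mu(u),
$$
for all $(f_1,\dots,f_n)\in \left(C_+(S^{n-1})\right)^n$.

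Next I would exploit rotation invariance. Fix a rotation $\phi$ of $S^{n-1}$ and let $\mu_\phi$ denote the pushforward of $\mu$ under $\phi^{-1}$, i.e., $\mu_\phi(E)=\mu(\phi E)$ for every Borel set $E$. A change of variables gives
$$
F(\phi f_1,\dots,\phi f_n)=\int_{S^{n-1}}f_1(\phi^{-1}u)\cdots f_n(\phi^{-1}u)\,d\mu(u)=\int_{S^{n-1}}f_1(v)\cdots f_n(v)\,d\mu_\phi(v).
$$
Since $F$ is rotation invariant, the left-hand side equals $F(f_1,\dots,f_n)$. Specializing to $f_2=\cdots=f_n=1_{S^{n-1}}\in C_+(S^{n-1})$, we obtain
$$
\int_{S^{n-1}}f_1\,d\mu_\phi=\int_{S^{n-1}}f_1\,d\mu,
$$
for every $f_1\in C_+(S^{n-1})$, hence for every $f_1\in C(S^{n-1})$ by linearity. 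The uniqueness clause of the Riesz representation theorem then forces $\mu_\phi=\mu$; that is, $\mu$ is invariant under every rotation of $S^{n-1}$.

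The final step is to invoke uniqueness of rotation-invariant finite Radon measures on $S^{n-1}$: any such measure must be a nonnegative scalar multiple of spherical Lebesgue measure $du$. Writing $\mu = c\,du$ for some $c\ge 0$ then yields
$$
F(f_1,\dots,f_n)=c\int_{S^{n-1}}f_1(u)\cdots f_n(u)\,du,
$$
as required. The only step that is not purely formal is the transfer of rotation invariance from $F$ to $\mu$, which could be obstructed if several measures gave rise to the same functional; this is handled cleanly by the uniqueness in the Riesz representation theorem once we test against a single variable with the other arguments set equal to the constant function $1$.
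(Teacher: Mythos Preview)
Your proof is correct and follows the paper's approach: apply Corollary~\ref{corthm1} to obtain $\mu$, show $\mu$ is rotation invariant, and then invoke the uniqueness of Haar measure. Your test for rotation invariance (one arbitrary $f_1\in C_+(S^{n-1})$ with the remaining arguments set to the constant function $1$, followed by Riesz uniqueness) differs only tactically from the paper's (which plugs in $f_1=\cdots=f_n=1_A$ for Borel $A$), and in fact stays more cleanly within $C_+(S^{n-1})$.
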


\begin{proof}
By Corollary~\ref{corthm1}, there is a finite Radon measure $\mu$ in $S^{n-1}$ such that $F$ has the integral representation (\ref{11}) with $X=S^{n-1}$. Then, if $A$ is a Borel subset of $S^{n-1}$, the rotation invariance of $F$ yields
\begin{eqnarray*}
\mu(\phi A)&=&\int_{S^{n-1}}1_{\phi A}(u)\,d\mu(u)=\int_{S^{n-1}}1_{\phi A}(u)^n\,d\mu(u)=\int_{S^{n-1}}(\phi 1_{A})(u)^n\,d\mu(u)\\
&=&F(\phi 1_A,\dots,\phi 1_A)=F(1_A,\dots,1_A)=\mu(A).
\end{eqnarray*}
Thus $\mu$ is rotation invariant and it follows from the uniqueness of Haar measure (see, for example, \cite[p.~584]{SW}) that $\mu$ is a multiple of spherical Lebesgue measure in $S^{n-1}$.
\end{proof}

\begin{lem}\label{addabs}
Let $Y$ be a Riesz space and let $n\in \N$.  If $F$ is a positive additive functional on $Y^n$, then
\begin{equation}\label{newabs}
|F(v_1,\dots,v_n)|\le F(v_1,\dots,v_{i-1},|v_i|,v_{i+1},\dots,v_n),
\end{equation}
whenever $v_i\in Y$ and $v_j\in Y_+$, $j=1,\dots,n$, $j\neq i$.
\end{lem}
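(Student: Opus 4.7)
The plan is to reduce the estimate to the positivity of $F$ by decomposing the $i$th argument into its positive and negative parts and using the already-established antisymmetry of additive functionals in each variable (equation~(\ref{newminus})).

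First, I would fix an index $i$, write $v_i = v_i^+ - v_i^-$ and $|v_i| = v_i^+ + v_i^-$ using the Riesz-space lattice structure, and observe that $v_i^+, v_i^- \in Y_+$. Since $F$ is additive on $Y^n$, I can take $T = Y$ in (\ref{newminus}) to conclude that $F(v_1,\dots,-v_i^-,\dots,v_n) = -F(v_1,\dots,v_i^-,\dots,v_n)$. Applying additivity in the $i$th slot then yields the two identities
\begin{equation*}
F(v_1,\dots,v_i,\dots,v_n) = F(v_1,\dots,v_i^+,\dots,v_n) - F(v_1,\dots,v_i^-,\dots,v_n),
\end{equation*}
\begin{equation*}
F(v_1,\dots,|v_i|,\dots,v_n) = F(v_1,\dots,v_i^+,\dots,v_n) + F(v_1,\dots,v_i^-,\dots,v_n).
\end{equation*}

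Next, since by hypothesis $v_j \in Y_+$ for $j \neq i$ and now both $v_i^+$ and $v_i^-$ lie in $Y_+$, the positivity of $F$ implies that the two summands $F(v_1,\dots,v_i^\pm,\dots,v_n)$ are both nonnegative real numbers. The elementary inequality $|a - b| \le a + b$ for $a,b \ge 0$ then gives
\begin{equation*}
|F(v_1,\dots,v_i,\dots,v_n)| \le F(v_1,\dots,v_i^+,\dots,v_n) + F(v_1,\dots,v_i^-,\dots,v_n) = F(v_1,\dots,|v_i|,\dots,v_n),
\end{equation*}
which is (\ref{newabs}).

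There is no real obstacle here: the only subtle point is the legitimacy of using (\ref{newminus}), which required $F$ to be additive on a full subspace $T^n$; this is met because our hypothesis supplies additivity on all of $Y^n$. Note that, in contrast to Lemma~\ref{addabs22}, we do not need multilinearity (hence no appeal to Lemma~\ref{addhomon}), since the argument only manipulates the $i$th coordinate while keeping the others fixed at nonnegative elements.
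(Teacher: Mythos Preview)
Your proof is correct and follows essentially the same approach as the paper: both arguments use additivity, the identity~(\ref{newminus}), and positivity applied to the $i$th slot, with the other arguments held in $Y_+$. The only cosmetic difference is that you explicitly split $v_i=v_i^+-v_i^-$ and invoke $|a-b|\le a+b$, whereas the paper works directly with the combinations $|v_i|\pm v_i\ge 0$; these are the same computation in slightly different clothing.
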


\begin{proof}
Using (\ref{newminus}), we have
\begin{eqnarray*}
\lefteqn{-F(v_1,\dots,v_n)+F(v_1,\dots,v_{i-1},|v_i|,v_{i+1},\dots,v_n)}\\
&=&
F(v_1,\dots,v_{i-1},-v_i,v_{i+1},\dots,v_n)+
F(v_1,\dots,v_{i-1},|v_i|,v_{i+1},\dots,v_n)\\
&=&
F(v_1,\dots,v_{i-1},-v_i+|v_i|,v_{i+1},\dots,v_n)\ge 0,
\end{eqnarray*}
since $-v_i+|v_i|\ge 0$.  Therefore
$F(v_1,\dots,v_n)\le F(v_1,\dots,v_{i-1},|v_i|,v_{i+1},\dots,v_n)$
and similarly one obtains
$-F(v_1,\dots,v_n)\le F(v_1,\dots,v_{i-1},|v_i|,v_{i+1},\dots,v_n)$.
\end{proof}

\begin{thm}\label{thm3}
Let $X$ be a compact Hausdorff space, let $n\in \N$, and let $F:\left(B_+(X)\right)^n\to [0,\infty)$ be additive.  Then there is a finite Radon measure $\mu$ in $X^n$ such that
\begin{equation}\label{1aaa}
F(f_1,\dots,f_n)=\int_{X^n}f_1(x_1)\cdots f_n(x_n)\,d\mu(x_1,\dots,x_n),
\end{equation}
for all $(f_1,\dots,f_n)\in \left(B_+(X)\right)^n$.
\end{thm}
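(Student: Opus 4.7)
The plan is to apply Theorem~\ref{thm1} to the restriction $F|_{(C_+(X))^n}$, which is itself a positive additive functional, to obtain a finite Radon measure $\mu$ on $X^n$ with the integral representation (\ref{1aaa}) valid for all $(f_1,\dots,f_n)\in (C_+(X))^n$.  The remaining task is to propagate the representation from continuous to bounded Borel nonnegative functions.

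I would first note that the proof of Lemma~\ref{lemthm2} carries over essentially verbatim when $C_+(X)$ is replaced by $B_+(X)$: it relies only on the presence of the constant function $1$, on monotonicity (Lemma~\ref{addmon}), and on positive homogeneity (Lemma~\ref{addhomon}), and all three remain available.  Consequently $F$ is continuous on $(B_+(X))^n$ in the sup norm.  Setting $G(f_1,\dots,f_n):=\int_{X^n}f_1(x_1)\cdots f_n(x_n)\,d\mu$, the functional $G$ is likewise sup-norm continuous and additive on $(B_+(X))^n$, and agrees with $F$ on $(C_+(X))^n$.  Since every bounded nonnegative Borel function on $X$ is a uniform limit of nonnegative simple functions, additivity combined with positive homogeneity (Lemma~\ref{addhomon}) reduces the problem to showing $F(1_{A_1},\dots,1_{A_n})=\mu(A_1\times\cdots\times A_n)$ for every $n$-tuple of Borel sets $A_i\subset X$.

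For this last reduction, I would exploit the regularity of the Radon measure $\mu$ on $X^n$.  Given $\varepsilon>0$, inner regularity (via compact sets) and outer regularity (via open sets), combined with standard compactness arguments in the product topology of $X^n$, yield compact $K_i\subset A_i\subset U_i$ open with $\mu((U_1\times\cdots\times U_n)\setminus(K_1\times\cdots\times K_n))<\varepsilon$.  Urysohn's lemma then produces continuous $\phi_i$ with $1_{K_i}\le\phi_i\le 1_{U_i}$.  Monotonicity of $F$ (Lemma~\ref{addmon}) gives the chain $F(1_{K_1},\dots,1_{K_n})\le F(\phi_1,\dots,\phi_n)\le F(1_{U_1},\dots,1_{U_n})$ together with $F(1_{K_1},\dots,1_{K_n})\le F(1_{A_1},\dots,1_{A_n})\le F(1_{U_1},\dots,1_{U_n})$, and since $F(\phi_1,\dots,\phi_n)=\int\phi_1\cdots\phi_n\,d\mu$ is within $\varepsilon$ of $\mu(A_1\times\cdots\times A_n)$, both $F(1_{A_1},\dots,1_{A_n})$ and $\mu(A_1\times\cdots\times A_n)$ lie within $O(\varepsilon)$ of this common continuous-function value, forcing equality.

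The main obstacle I expect is making the two-sided sandwich in this last step tight: one must know that Radon regularity on $X^n$ can be realized by products of open and compact sets in the individual coordinates, so that the Urysohn approximations in each factor combine compatibly.  This is a standard product-topology compactness tool but deserves careful handling in the general compact Hausdorff setting.  Everything else is a routine approximation, with sup-norm continuity of $F$ propagating the equality from characteristic functions through simple functions up to all of $(B_+(X))^n$.
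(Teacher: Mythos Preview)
Your overall strategy---restrict $F$ to $(C_+(X))^n$, invoke Theorem~\ref{thm1} to obtain $\mu$, then extend the representation to $(B_+(X))^n$---matches the paper's.  The gap is in your sandwich argument for characteristic functions.

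You correctly obtain
\[
F(1_{K_1},\dots,1_{K_n})\le F(1_{A_1},\dots,1_{A_n})\le F(1_{U_1},\dots,1_{U_n})
\quad\text{and}\quad
F(1_{K_1},\dots,1_{K_n})\le F(\phi_1,\dots,\phi_n)\le F(1_{U_1},\dots,1_{U_n}),
\]
from which $|F(1_{A_1},\dots)-F(\phi_1,\dots,\phi_n)|\le F(1_{U_1},\dots)-F(1_{K_1},\dots)$.  But you have \emph{no bound} on this last difference: your choice of $K_i,U_i$ was made so that $\mu\bigl(\prod U_i\setminus\prod K_i\bigr)<\varepsilon$, which controls $G$, not $F$.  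The claim that ``both $F(1_{A_1},\dots)$ and $\mu(A_1\times\cdots\times A_n)$ lie within $O(\varepsilon)$ of this common continuous-function value'' is therefore unjustified for the first of the two quantities.  The circularity is that bounding $F(1_{U_1},\dots)-F(1_{K_1},\dots)$ via $\mu$ is precisely the statement you are trying to prove.  One can try to repair this by sandwiching once more with continuous functions above $1_{U_i}$ and below $1_{K_i}$, but the lower sandwich is delicate (a compact $K_i$ need admit no nontrivial continuous minorant of $1_{K_i}$), and in any case this is not what you wrote.

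The paper sidesteps the issue by a different mechanism.  It extends $F$ multilinearly to $(B(X))^n$ (Lemma~\ref{lemthm1}), and for arbitrary $f_i\in B_+(X)$ uses Lusin's theorem with respect to the marginal $\mu_i$ to find $g_i\in C_+(X)$ with $|g_i-f_i|\le h_i$ for some continuous $h_i$ of small $\mu_i$-integral.  The telescoping identity~(\ref{Graf}) together with Lemma~\ref{addabs} and monotonicity then bounds $|F(g_1,\dots,g_n)-F(f_1,\dots,f_n)|$ by a sum of terms of the form $F(M,\dots,M,h_i,M,\dots,M)$.  The key point is that these arguments are \emph{all continuous}, so each such term equals the corresponding $\mu$-integral, which is small.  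Thus every estimate on $F$ is routed through continuous inputs, where $F=\widehat F$ is already known, and the circularity in your argument never arises.
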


\begin{proof}
Let $F:\left(B_+(X)\right)^n\to [0,\infty)$ be additive.  By Lemma~\ref{lemthm1} with $Y=B(X)$, $F$ extends to a positive multilinear functional on $\left(B(X)\right)^n$ that we shall also denote by $F$.  Since $F$ is positive and additive on $\left(C_+(X)\right)^n$, Theorem~\ref{thm1} implies that there is a finite Radon measure $\mu$ in $X^n$ such that (\ref{1aaa}) holds when $f_1,\dots,f_n\in C_+(X)$.  Define
\begin{equation}\label{1ah}
\widehat{F}(f_1,\dots,f_n)=\int_{X^n}f_1(x_1)\cdots f_n(x_n)\,d\mu(x_1,\dots,x_n),
\end{equation}
for all $(f_1,\dots,f_n)\in \left(B(X)\right)^n$.  We have to show that $F=\widehat{F}$ on $\left(B_+(X)\right)^n$.

Let $f_1,\dots,f_n\in B_+(X)$ and choose $M$ such that $f_i\le M$ for $i=1,\dots,n$. Let $\ee>0$.  Suppose that $i\in \{1,\dots,n\}$.  Let $\mu_i$ be the finite Radon measure in $X$ that is the projection of $\mu$ onto the $i$th copy of $X$ in the product $X^n$, i.e.,
$$\mu_i(E)=\mu(X\times\cdots\times X\times  E\times X\times\cdots\times X),$$
for all Borel sets $E$ in $X$.
By Lusin's theorem, there is a $g_i\in C_+(X)$ and a compact set $A_i$ in $X$ such that $g_i=f_i$ on $X\setminus A_i$ and $\mu_i(A_i)<\ee$.  We may also assume that $g_i\le M$.  Then $g_i-f_i=0$ on $X\setminus A_i$ and $|g_i-f_i|\le M$ on $A_i$, so
\begin{equation}\label{help4}
\int_{X}|g_i(x)-f_i(x)|\,d\mu_i(x)\le M\mu_i(A_i)<M\ee.
\end{equation}
Since $A_i$ is compact, we can choose $h_i\in C_+(X)$ such that $|g_i-f_i|\le h_i$ and
\begin{equation}\label{help1}
\int_{X}h_i(x)\,d\mu_i(x)<(M+1)\ee.
\end{equation}
By (\ref{Graf}), Lemma~\ref{addabs} with $Y=B(X)$, the fact that $F$ is increasing on  $\left(B_+(X)\right)^n$ (given by Lemma~\ref{addmon} with $Y=B(X)$), and (\ref{help1}), we obtain
\begin{eqnarray}\label{help2}
|F(g_1,\dots,g_n)-F(f_1,\dots,f_n)|
&\le &\sum_{i=1}^n |F(f_1,\dots,f_{i-1},g_i-f_i,g_{i+1},
\dots,g_n)|\nonumber\\
&\le &\sum_{i=1}^n F(f_1,\dots,f_{i-1},|g_i-f_i|,g_{i+1},
\dots,g_n)\nonumber\\
&\le & \sum_{i=1}^nF(M,\dots,M,h_i,M,\dots,M)\nonumber\\
&=&\sum_{i=1}^n\int_{X^n}M^{n-1}h_i(x_i)\,d\mu(x_1,\dots,x_n)\nonumber\\
&=&\sum_{i=1}^n\int_{X}M^{n-1}h_i(x_i)\,d\mu_i(x_i)<nM^{n-1}(M+1)\ee.
\end{eqnarray}
Noting that $\widehat{F}$ is additive on $\left(B(X)\right)^n$ by its definition, we can use (\ref{Graf}) again, (\ref{1ah}), and (\ref{help4}) to get
\begin{eqnarray}\label{help3}
|\widehat{F}(g_1,\dots,g_n)-\widehat{F}(f_1,\dots,f_n)|&= &\left|\sum_{i=1}^n \widehat{F}(f_1,\dots,f_{i-1},g_i-f_i,g_{i+1},
\dots,g_n)\right|\nonumber\\
&\le &\sum_{i=1}^n\int_{X}M^{n-1}|g_i(x_i)-f_i(x_i)|\,d\mu_i(x_i)< nM^{n}\ee.
\end{eqnarray}
Since $F(g_1,\dots,g_n)=\widehat{F}(g_1,\dots,g_n)$, (\ref{help2}) and (\ref{help3}) yield
$$|F(f_1,\dots,f_n)-\widehat{F}(f_1,\dots,f_n)|<nM^{n-1}(2M+1)\ee.$$
It follows that $F(f_1,\dots,f_n)=\widehat{F}(f_1,\dots,f_n)$ and hence that $F=\widehat{F}$ on $\left(B_+(X)\right)^n$.
\end{proof}

The following result is obtained from Theorem~\ref{thm3} in exactly the same fashion as Corollaries~\ref{corthm1} and~\ref{cor2thm1} were obtained from Theorem~\ref{thm1}.

\begin{cor}\label{corthm3}
If $F:\left(B_+(S^{n-1})\right)^n\to [0,\infty)$ is additive and vanishes when the supports of two of its arguments are disjoint, then there is a finite Radon measure $\mu$ in $S^{n-1}$ such that
$$
F(f_1,\dots,f_n)=\int_{S^{n-1}}f_1(u)\cdots f_n(u)\,d\mu(u),
$$
for all $(f_1,\dots,f_n)\in \left(B_+(S^{n-1})\right)^n$.  If in addition $F$ is rotation invariant, then there is a $c\ge 0$ such that
$$
F(f_1,\dots,f_n)=c\int_{S^{n-1}}f_1(u)\cdots f_n(u)\,du,
$$
for all $(f_1,\dots,f_n)\in \left(B_+(S^{n-1})\right)^n$.
\end{cor}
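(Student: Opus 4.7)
The plan is to mimic the proofs of Corollaries~\ref{corthm1} and~\ref{cor2thm1}, with Theorem~\ref{thm3} playing the role of Theorem~\ref{thm1}. Since $F:\left(B_+(S^{n-1})\right)^n\to[0,\infty)$ is additive, Theorem~\ref{thm3} with $X=S^{n-1}$ yields a finite Radon measure $\nu$ on $(S^{n-1})^n$ such that
\begin{equation*}
F(f_1,\dots,f_n)=\int_{(S^{n-1})^n}f_1(u_1)\cdots f_n(u_n)\,d\nu(u_1,\dots,u_n),
\end{equation*}
for all $(f_1,\dots,f_n)\in\left(B_+(S^{n-1})\right)^n$. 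The first assertion will follow once I show that $\nu$ is concentrated on the diagonal of $(S^{n-1})^n$, whereupon I take $\mu$ to be the projection of $\nu$ onto any one coordinate.

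To carry this out, I would fix a point $(u_1,\dots,u_n)$ off the diagonal, so that $u_{i_1}\ne u_{i_2}$ for some indices $i_1\ne i_2$. Since $S^{n-1}$ is Hausdorff, I can choose open neighborhoods $U_{i_1}$ of $u_{i_1}$ and $U_{i_2}$ of $u_{i_2}$ with disjoint closures, together with arbitrary open neighborhoods $U_i$ of $u_i$ for the remaining indices. Setting $f_i=1_{U_i}\in B_+(S^{n-1})$, the supports of $f_{i_1}$ and $f_{i_2}$ are disjoint, so the vanishing hypothesis forces $F(f_1,\dots,f_n)=0$. Evaluating the integral representation on this tuple gives $\nu\bigl(\prod_{i=1}^n U_i\bigr)=0$, so every off-diagonal point has an open neighborhood of zero $\nu$-measure, and hence $\nu$ is concentrated on the diagonal. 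Defining $\mu(E)=\nu(E\times S^{n-1}\times\cdots\times S^{n-1})$ for all Borel sets $E\subseteq S^{n-1}$ then produces a finite Radon measure on $S^{n-1}$ with the desired integral representation, exactly as in the proof of Corollary~\ref{corthm1}.

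For the rotation-invariant case, I would run the computation in the proof of Corollary~\ref{cor2thm1} verbatim: for any rotation $\phi$ of $S^{n-1}$ and any Borel set $A\subseteq S^{n-1}$, applying the integral representation to the tuple $(1_{\phi A},\dots,1_{\phi A})$ and using $1_{\phi A}=\phi 1_A$ together with the rotation invariance of $F$ gives
\begin{equation*}
\mu(\phi A)=F(1_{\phi A},\dots,1_{\phi A})=F(\phi 1_A,\dots,\phi 1_A)=F(1_A,\dots,1_A)=\mu(A).
\end{equation*}
Thus $\mu$ is rotation invariant on $S^{n-1}$, and the uniqueness of Haar measure yields a $c\ge 0$ such that $d\mu=c\,du$, finishing the proof.

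I do not expect any genuine obstacle here: all of the hard work has already been absorbed into Theorem~\ref{thm3}, and the remaining argument merely duplicates the continuous case. The only minor observation is that the step requiring test functions with disjoint supports is actually easier than in Corollary~\ref{corthm1}, since indicator functions of open sets are immediately available in $B_+(S^{n-1})$ and there is no need to construct continuous bump functions.
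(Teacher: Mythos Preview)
Your proposal is correct and follows exactly the approach the paper intends: the paper simply states that the result is obtained from Theorem~\ref{thm3} in the same fashion as Corollaries~\ref{corthm1} and~\ref{cor2thm1} were obtained from Theorem~\ref{thm1}, and your argument spells this out. Your remark that indicator functions suffice (avoiding continuous bump functions) is a minor simplification entirely in keeping with the paper's implicit proof.
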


\section{Positive additive functionals on $\left({\mathcal{S}}_o^n\right)^n$
or $\left({\mathcal{S}}^n\right)^n$}\label{SorS}

In this section we draw conclusions from the results of the previous section by identifying a star body (or star set) $L$ in $\R^n$ with its radial function $\rho_L\in C_+(S^{n-1})$ (or $\rho_L\in B_+(S^{n-1})$, respectively).  Various properties of functionals on $\left({\mathcal{S}}_o^n\right)^n$
or $\left({\mathcal{S}}^n\right)^n$ can now be defined via those of the corresponding properties of functions on $\left(C_+(S^{n-1})\right)^n$ or $\left(B_+(S^{n-1})\right)^n$, respectively.  Thus we say that a functional $F$ on $\left({\mathcal{S}}_o^n\right)^n$ is {\em additive} if
\begin{eqnarray}\label{staraddd}
\lefteqn{F(L_1,\dots,L_{i-1},L_i\widetilde{+}M_i,L_{i+1},...,L_n)}\nonumber
&\\
&=&F(L_1,\dots,L_{i-1},L_i,L_{i+1},\dots,L_n)+F(L_1,\dots,L_{i-1},M_i,L_{i+1},\dots,L_n),
\end{eqnarray}
whenever $L_i,M_i\in {\mathcal{S}}_o^n$, $i=1,\dots,n$, {\em positive} if $F\ge 0$, and {\em rotation invariant} if
\begin{equation}\label{starrotd}
F(\phi L_1,\dots,\phi L_n)=F(L_1,...,L_n),
\end{equation}
for all $L_1,\dots,L_n\in {\mathcal{S}}_o^n$ and rotations $\phi$ of $S^{n-1}$.  The corresponding properties of a functional $F$ on $\left({\mathcal{S}}^n\right)^n$ are defined analogously.

Theorems~\ref{thm1} and~\ref{thm3} and Corollaries~\ref{corthm1}, \ref{cor2thm1}, and \ref{corthm3}, immediately yield the following result.

\begin{thm}\label{thm2}
Let $X={\mathcal{S}}_o^n$ or ${\mathcal{S}}^n$.

{\noindent{\rm{(i)}}} If $F:X^n\to [0,\infty)$ is additive, then there is a finite Radon measure $\mu$ in $\left(S^{n-1}\right)^n$ such that
$$
F(L_1,\dots,L_n)=\int_{\left(S^{n-1}\right)^n}\rho_{L_1}(u_1)\cdots \rho_{L_n}(u_n)\,d\mu(u_1,\dots,u_n),
$$
for all $L_1,\dots, L_n\in X$.

{\noindent{\rm{(ii)}}} If $F$ also vanishes when the intersection of two of the arguments is $\{o\}$, then there is a finite Radon measure $\mu$ in $S^{n-1}$ such that
$$
F(L_1,\dots,L_n)=\int_{S^{n-1}}\rho_{L_1}(u)\cdots \rho_{L_n}(u)\,d\mu(u),
$$
for all $L_1,\dots, L_n\in X$.

{\noindent{\rm{(iii)}}} If in addition to the previously assumed properties $F$ is also rotation invariant, then there is a $c\ge 0$ such that
$$
F(L_1,\dots,L_n)=c\widetilde{V}(L_1,\dots,L_n),
$$
for all $L_1,\dots, L_n\in X$.
\end{thm}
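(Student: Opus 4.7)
The plan is to reduce the three statements directly to the corresponding results of the previous section via the bijection $L\leftrightarrow \rho_L$ between star sets (resp.\ star bodies) in $\R^n$ and $B_+(S^{n-1})$ (resp.\ $C_+(S^{n-1})$). Concretely, given $F$ on $X^n$, where $X={\mathcal{S}}^n$ or ${\mathcal{S}}_o^n$, define $G$ on $\left(B_+(S^{n-1})\right)^n$ or $\left(C_+(S^{n-1})\right)^n$, respectively, by $G(f_1,\dots,f_n)=F(L_1,\dots,L_n)$, where $L_i$ is the unique star set (or star body) with $\rho_{L_i}=f_i$, namely $L_i=\{ru:0\le r\le f_i(u),\,u\in S^{n-1}\}$. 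The surjectivity of $L\mapsto \rho_L$ onto $B_+(S^{n-1})$ (respectively $C_+(S^{n-1})$) is immediate from the definition of a star set (respectively star body), so $G$ is well defined on the whole product.

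The key observation is that each hypothesis on $F$ translates transparently into the corresponding hypothesis on $G$. Since the radial function of $L\widetilde{+}M$ is $\rho_L+\rho_M$, the additivity assumption (\ref{staraddd}) on $F$ is equivalent to additivity of $G$ in each argument on the positive cone. Positivity of $F$ gives $G\ge 0$. For part (ii), note that for $L,M\in X$ both containing $o$, one has $L\cap M=\{o\}$ if and only if there is no $u\in S^{n-1}$ with $\rho_L(u)>0$ and $\rho_M(u)>0$, i.e., the supports of $\rho_L$ and $\rho_M$ are disjoint. For part (iii), the definition $(\phi f)(u)=f(\phi^{-1}u)$ and the fact that $\rho_{\phi L}(u)=\rho_L(\phi^{-1}u)$ show that rotation invariance (\ref{starrotd}) of $F$ is equivalent to rotation invariance of $G$.

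With these translations in hand, part (i) follows by applying Theorem~\ref{thm3} to $G$ when $X={\mathcal{S}}^n$ and Theorem~\ref{thm1} to $G$ when $X={\mathcal{S}}_o^n$, in both cases with the compact Hausdorff space $S^{n-1}$. Part (ii) follows similarly from Corollary~\ref{corthm3} (respectively Corollary~\ref{corthm1}), giving a finite Radon measure $\mu$ on $S^{n-1}$ with
\[
F(L_1,\dots,L_n)=\int_{S^{n-1}}\rho_{L_1}(u)\cdots\rho_{L_n}(u)\,d\mu(u).
\]
Part (iii) follows from the second half of Corollary~\ref{corthm3} (respectively Corollary~\ref{cor2thm1}), which provides a constant $c'\ge 0$ with
\[
F(L_1,\dots,L_n)=c'\int_{S^{n-1}}\rho_{L_1}(u)\cdots\rho_{L_n}(u)\,du,
\]
and by the definition (\ref{dmv}) of the dual mixed volume, the right-hand side equals $nc'\,\widetilde{V}(L_1,\dots,L_n)$, so setting $c=nc'$ completes the proof.

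There is no genuine obstacle here; the whole content of the theorem has been absorbed into the analytical results of Section~\ref{funsonCB}. The only point requiring any care is the verification that the hypotheses transport correctly under the identification $L\leftrightarrow\rho_L$, in particular the equivalence between $L_i\cap L_j=\{o\}$ and disjointness of supports, and the compatibility of the two notions of rotation invariance.
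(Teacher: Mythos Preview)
Your proposal is correct and follows exactly the approach the paper takes: the paper simply states that Theorems~\ref{thm1} and~\ref{thm3} and Corollaries~\ref{corthm1}, \ref{cor2thm1}, and~\ref{corthm3} ``immediately yield'' Theorem~\ref{thm2} via the identification $L\leftrightarrow\rho_L$, and you have spelled out that reduction. One small inaccuracy: $L\cap M=\{o\}$ is equivalent to $\{\rho_L>0\}\cap\{\rho_M>0\}=\emptyset$, which is \emph{weaker} than disjointness of the (closed) supports of $\rho_L$ and $\rho_M$; however, only the implication ``supports disjoint $\Rightarrow$ $L\cap M=\{o\}$'' is needed to transfer the hypothesis on $F$ to the hypothesis on $G$ required by the corollaries, so your argument goes through.
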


The following examples show that none of the assumptions in Theorem~\ref{thm2} can be omitted.

\begin{ex}\label{example1}
{\em For $L_i\in {\mathcal{S}}_o^n$, $i=1,\dots,n$ (or for $L_i\in {\mathcal{S}}^n$, $i=1,\dots,n$), define
$$F(L_1,\dots,L_n)={\mathcal{H}}^n\left(\cap_{i=1}^nL_i\right).$$
Then $F$ is rotation invariant and vanishes when the intersection of two of its arguments is $\{o\}$, but it is not additive.}
\end{ex}

\begin{ex}\label{example2}
{\em For $L_i\in {\mathcal{S}}_o^n$, $i=1,\dots,n$ (or for $L_i\in {\mathcal{S}}^n$, $i=1,\dots,n$), define
$$F(L_1,\dots,L_n)=\prod_{i=1}^n\int_{S^{n-1}}\rho_{L_i}(u)\,du.$$
Clearly, $F$ is additive and rotation invariant. However, it does not always vanish when the intersection of two of its arguments is $\{o\}$. For example, if the $L_i$'s are cones whose bases are disjoint spherical caps of positive radius, then $F(L_1,\dots,L_n)>0$.}
\end{ex}

\begin{ex}\label{example3}
{\em Let $M$ be any star body that is not a ball with center at the origin. For $L_i\in {\mathcal{S}}_o^n$, $i=1,\dots,n$ (or for $L_i\in {\mathcal{S}}^n$, $i=1,\dots,n$), define
$$F(L_1,\dots,L_n)=\int_{S^{n-1}}\rho_{L_1}(u)\cdots \rho_{L_n}(u)\rho_M(u)\,du.$$
Then $F$ is additive and vanishes when the intersection of two of its arguments is $\{o\}$, but it is not rotation invariant.}
\end{ex}

\section{Real-valued additive functionals.}

The positivity of $F$ was used in an essential way in Lemma~\ref{addmon}, in which the fact that $F:Y_+^n\to [0,\infty)$ is increasing was deduced from its additivity.  However, all the main results in Sections~\ref{funsonCB} and~\ref{SorS} hold for real-valued functionals if it is assumed in addition that they are increasing.  Indeed, the simple observation that if $F: Y_+^n \to \R$ is additive and increasing, then $F \ge 0$, allows all the proofs go through as before.  In particular, we have the following result.

\begin{thm}\label{thm2real}
Let $X={\mathcal{S}}_o^n$ or ${\mathcal{S}}^n$. If $F:X^n\to \R$ is additive, increasing, rotation invariant, and vanishes when the intersection of two of the arguments is $\{o\}$, then there is a $c\ge 0$ such that
$$
F(L_1,\dots,L_n)=c\widetilde{V}(L_1,\dots,L_n),
$$
for all $L_1,\dots, L_n\in X$.
\end{thm}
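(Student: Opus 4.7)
The plan is to reduce Theorem~\ref{thm2real} directly to Theorem~\ref{thm2}(iii). The hypotheses coincide except for one point: Theorem~\ref{thm2}(iii) requires $F$ to take values in $[0,\infty)$, while here $F$ is only real-valued and assumed to be increasing. So the entire task is to show that an additive, increasing $F:X^n\to\R$ is automatically nonnegative, after which Theorem~\ref{thm2}(iii) immediately delivers a constant $c\ge 0$ with $F=c\widetilde{V}$.

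To prove $F\ge 0$, I would first observe that $F$ vanishes whenever one of its arguments is the trivial star set $\{o\}$. Indeed, $\rho_{\{o\}}\equiv 0$, so $\{o\}\widetilde{+}\{o\}=\{o\}$, and the additivity (\ref{staraddd}) in the first slot gives
\[
F(\{o\},L_2,\dots,L_n)=F(\{o\}\widetilde{+}\{o\},L_2,\dots,L_n)=2F(\{o\},L_2,\dots,L_n),
\]
forcing $F(\{o\},L_2,\dots,L_n)=0$; the same argument handles every other slot. Next, since every star set contains the origin, $L_1\supseteq\{o\}$, and the monotonicity of $F$ in the first argument yields
\[
F(L_1,L_2,\dots,L_n)\ge F(\{o\},L_2,\dots,L_n)=0,
\]
for all $L_1,\dots,L_n\in X$. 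Hence $F$ is positive.

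With positivity in hand, $F$ satisfies every hypothesis of Theorem~\ref{thm2}(iii), which yields the desired conclusion. I do not expect any serious obstacle: all of the analytically substantial work --- extending an additive map on the positive cone to a multilinear map via Lemma~\ref{lemthm1}, upgrading additivity to continuity through Lemma~\ref{lemthm2} and Proposition~\ref{prp000}, securing the integral representation via Proposition~\ref{prp0}, and reducing the representing measure to spherical Lebesgue measure by Haar uniqueness --- is already in place. The only conceptually new ingredient is the two-line positivity deduction above, which is exactly the \emph{simple observation} flagged in the paragraph preceding the theorem.
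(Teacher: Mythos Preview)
Your proposal is correct and follows exactly the paper's own approach: the paper proves Theorem~\ref{thm2real} by the ``simple observation'' that additive plus increasing on $Y_+^n$ forces $F\ge 0$, after which Theorem~\ref{thm2}(iii) applies verbatim. Your two-line positivity argument (vanishing at $\{o\}$ by additivity, then comparing $L_i\supseteq\{o\}$ via monotonicity) is precisely what that observation amounts to.
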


Note that here $F$ is increasing if it is increasing in each argument with respect to set inclusion.  This is compatible with our previous use of the term, since if $X={\mathcal{S}}_o^n$ or ${\mathcal{S}}^n$ and $L,M\in X$, then $L\subset M$ if and only if $\rho_L\le \rho_M$.

None of the assumptions in Theorem~\ref{thm2real} can be omitted.  Indeed, all the functionals in Examples~\ref{example1},~\ref{example2}, and~\ref{example3} are increasing, showing that none of the other assumptions can be dropped.  If we define $F(L_1,\dots,L_n)=-\widetilde{V}(L_1,\dots,L_n)$, then of course $F$ is not increasing but retains the other properties assumed in Theorem~\ref{thm2real}.

However, an intriguing possibility arises, namely, that without assuming that $F$ is increasing in Theorem~\ref{thm2real}, the result holds with the weaker conclusion that there is a $c\in \R$ such that
$F(L_1,\dots,L_n)=c\widetilde{V}(L_1,\dots,L_n)$,
for all $L_1,\dots, L_n\in X$.  The following example addresses this question.

\begin{ex}\label{examplereal}
{\em Let $X={\mathcal{S}}_o^n$ or ${\mathcal{S}}^n$.  Assuming the Axiom of Choice, there is an additive function $h:\R\to \R$ which is not linear (see, for example, \cite[Section~7.3]{Cie97}).  Define $F:X^n\to \R$ by
$$F(L_1,\dots,L_n)=h\left(\widetilde{V}(L_1,\dots,L_n)\right),$$
for $L_i\in X$, $i=1,\dots,n$.  It is easy to check that $F$ is additive, rotation invariant, and vanishes when the intersection of two of its arguments is $\{o\}$.  (The latter property requires $h(0)=0$, a consequence of the additivity of $h$.)  However, there is no $c\in \R$ such that $F(L_1,\dots,L_n)=c\widetilde{V}(L_1,\dots,L_n)$.  If there were, then given $t\ge 0$, we could choose $L_1(t),\dots,L_n(t)\in X$ such that $\widetilde{V}(L_1(t),\dots,L_n(t))=t$ (for example by taking $L_i(t)=\left(t/{{\mathcal{H}}^n}(B^n)\right)^{1/n}B^n$, for $i=1,\dots,n$, where $B^n$ is the unit ball in $\R^n$), leading to
$h(t)=F(L_1(t),\dots,L_n(t))=ct$, for all $t\ge 0$.  Then for $t<0$, we have $h(t)=h(0)-h(-t)=ct$ by the additivity of $h$, so $h$ is linear on $\R$, a contradiction.}
\end{ex}

We remark that the previous example may be adapted to form a small observation regarding the paper \cite{MilS11} by Milman and Schneider on characterizing the mixed volume.  Namely, with $h$ as in the previous example, the functional $F:\left({\mathcal{K}}^n\right)^n\to \R$ defined on $n$-tuples of compact convex sets in $\R^n$ by
$F(K_1,\dots,K_n)=h(V(K_1,\dots,K_n))$, for $K_i\in {\mathcal{K}}^n$, $i=1,\dots,n$, is additive and vanishes if two of its arguments are parallel line segments, but $F$ is not a real constant multiple of the mixed volume.  This shows that this weaker conclusion to \cite[Theorem~2]{MilS11} cannot be obtained in ZFC if the assumption that $F$ is increasing is omitted.

Additive nonlinear functions from $\R$ to $\R$ can be constructed via a Hamel basis, which in turn is constructed using the Axiom of Choice.  It is known, however, that it is consistent with Zermelo-Fraenkel set theory ZF that all additive functions from $\R$ to $\R$ are linear. This follows from Solovay's model \cite{She84} of ZF in which every set of reals is Baire measurable, together with the fact that any additive, Baire-measurable function from $\R$ to $\R$ must be linear.  (The latter fact is proved in the same way as the well-known result that any additive, Lebesgue-measurable function from $\R$ to $\R$ must be linear.)  We leave open the question as to whether it is consistent with ZF that Theorem~\ref{thm2real} holds for some $c\in\R$ without the assumption that $F$ is increasing, as well as the corresponding question regarding \cite[Theorem~2]{MilS11}.

\section*{Appendix: A direct approach to a case of Theorem~\ref{thm2}}\label{Direct}

It is perhaps worth remarking that Theorem~\ref{thm2}(iii) can be proved directly, that is, without using Proposition~\ref{prp0}, at least in the case when $X={\mathcal{S}}^n$ and the slightly stronger assumption is made that $F$ vanishes when the intersection of two its arguments has ${\mathcal{H}}^n$-measure zero.  Here we outline how this may be done.  A little terminology is needed.

As in  \cite{Kla96}, we define the \emph{star hull} of a set $A$ in $\R^n$ by
$$\st A=\{tx:x\in A, 0\leq t\leq1\}.$$
If $\alpha>0$ and $A$ is a Borel set in $S^{n-1}$, the set $C=\alpha\,\st A$ will be called a  \textit{cone} of base $A$ and radius $\alpha$. Note that $C$ is a star set and $\rho_{C}=\alpha 1_A$.

A \textit{polycone} is a finite union of cones.  If $P$ is a nontrivial polycone, there are unique $\alpha_j>0$ and disjoint Borel sets $A_j\subset S^{n-1}$, $j=1,\dots,m$, such that
$$\rho_P=\sum_{j=1}^m\alpha_j1_{A_j}.$$
(Compare \cite[Proposition~2.12]{Kla96}.  No proof is given, but the argument is straightforward.)  If $C_j=\alpha_j\,\st A_j$, then $P=\cup_{j=1}^mC_j$ expresses the polycone $P$ as the union of cones $C_j$, $j=1,\dots,m$, that meet only at the origin.

With this in hand, we can sketch the proof. If $F:(\mathcal{S}^n)^n\to[0,\infty)$ is additive, then via Lemmas~\ref{addmon} and~\ref{addhomon}, $F$ may be assumed to be also increasing and positively homogeneous whenever these properties are required. Suppose that $F$ vanishes whenever the intersection of two of its arguments is $\{o\}$ and define $\mu(A)=F(\st A,\dots,\st A)$, for each Borel set $A$ in $S^{n-1}$. Then one can show that $\mu$ is a valuation, i.e., that
$$
\mu(A\cup B)+\mu(A\cap B)=\mu(A)+\mu(B),
$$
for all Borel sets $A,B\subset S^{n-1}$.  Now if $F$ vanishes when the intersection of two its arguments has ${\mathcal{H}}^n$-measure zero and is rotation invariant, then $\mu$ is a rotation invariant valuation on the Borel sets in $S^{n-1}$ that vanishes on sets of ${\mathcal{H}}^{n-1}$-measure zero.  The restriction of $\mu$ to the spherical convex polytopes in $S^{n-1}$ is therefore a nonnegative, rotation invariant valuation which is also simple, meaning that it vanishes on spherical convex polytopes in $S^{n-1}$ that are not full dimensional.  A result of Schneider \cite[Theorem~6.2]{Sch78} implies that there is a $\lambda\ge 0$ such $\mu(A)=\lambda{\mathcal{H}}^{n-1}(A)$ whenever $A$ is a spherical convex polytope in $S^{n-1}$. As is shown in \cite[p.~226]{McS}, this also holds whenever $A$ is a Borel set in $S^{n-1}$.  From this and the positive homogeneity of $F$, it is easy to conclude that there is a $c\ge 0$ such that
\begin{equation}\label{eq:characterization_volumes}
F(L,\dots,L)=c{\mathcal{H}}^n(L),
\end{equation}
for any polycone $L$.

The next step is to show that if $A_i$ is a Borel set in $S^{n-1}$, $\alpha_i> 0$,  and $C_i$ is the cone with base $A_i\subset S^{n-1}$ and radius $\alpha_i$, $i=1,\dots,n$, then
\begin{equation}\label{eq:Fcones}
F(C_1,\dots,C_n)=\frac{\alpha_1\cdots\alpha_n}{(\min\{\alpha_1,\dots, \alpha_n\})^n}F\left(L,\dots,L\right),
\end{equation}
where $L=\cap_{i=1}^nC_i$.  This is done by a standard disjointification argument, again using the positive homogeneity of $F$.  The rotation invariance of $F$ is not needed for the latter step, but may now be invoked, together with (\ref{eq:characterization_volumes}) and (\ref{eq:Fcones}), to yield that there is a $c\ge 0$ such that
\begin{equation}\label{eq:characterization_cones}
F(C_1,\dots,C_n)=c\widetilde{V}(C_1,\dots,C_n),
\end{equation}
for cones $C_1,\dots,C_n$.

Now, using (\ref{eq:characterization_cones}) and the additivity of $F$, it is routine to show that (\ref{eq:characterization_cones}) holds when the $C_i$'s are polycones.  The final step is to show that (\ref{eq:characterization_cones}) holds when $C_i$ is replaced by a general star set $L_i$, $i=1,\dots,n$.  This is achieved by the usual uniform approximation of the nonnegative, bounded Borel function $\rho_{L_i}$ by simple nonnegative Borel functions (see, for example, \cite[Theorem~1.17]{Rud87}) and using the fact that $F$ is increasing and the monotone convergence theorem.

\bigskip

\end{document}